\documentclass[12pt]{amsart}

\usepackage{amssymb,amsmath}
\usepackage{amsfonts}
\usepackage{amssymb}
\usepackage{amsthm}
\usepackage{caption}
\usepackage{subcaption}
\usepackage{graphicx}
\usepackage{tikz}
\usepackage[normalem]{ulem}
\usepackage{enumerate}
\usepackage{MnSymbol}
\usepackage{graphicx, scalefnt}
\usepackage[normalem]{ulem}
\usepackage{enumerate}
\usepackage[utf8]{inputenc}
\usepackage{mathtools}
\usepackage{xspace}
\usepackage{standalone}
\usepackage{calc}
\usepackage{pgfplots}
\usepackage{etex}

\usetikzlibrary{arrows,positioning,automata,shapes,calc,3d,graphs}
\usetikzlibrary{decorations.markings}
\tikzstyle{vertex}=[circle, draw, inner sep=0pt, minimum size=6pt]
\usetikzlibrary{decorations.pathmorphing}
\usetikzlibrary{patterns,decorations.pathreplacing}
\usetikzlibrary{shapes,arrows}
\usetikzlibrary{calc}


\tikzstyle{block} = [rectangle, draw, text width=5em, text centered, rounded corners, minimum height=4em]
\tikzstyle{vertex}=[circle, draw, inner sep=0pt, minimum size=10pt]
\tikzstyle{decision} = [diamond, draw, text width=5em, text badly centered, node distance=3cm, inner sep=0pt]


\theoremstyle{plain}
\newtheorem{theorem}{Theorem}[section]
\newtheorem{lemma}[theorem]{Lemma}

\newtheorem{corollary}[theorem]{Corollary}

\newtheorem{problem}[theorem]{Problem}


\newtheorem*{repp@theorem}{\repp@title (reformulated)}
\newcommand{\newrepptheorem}[2]{%
\newenvironment{repp#1}[1]{%
 \def\repp@title{#2 \ref{##1}}%
 \begin{repp@theorem}}%
 {\end{repp@theorem}}}
\makeatother
\newrepptheorem{theorem}{Theorem}

\theoremstyle{definition}

\newtheorem{example}[theorem]{Example}
\makeatletter
\newtheorem*{rep@theorem}{\rep@title \ continued}
\newcommand{\newreptheorem}[2]{%
\newenvironment{rep#1}[1]{%
 \def\rep@title{#2 \ref{##1}}%
 \begin{rep@theorem}}%
 {\end{rep@theorem}}}
\makeatother
\newreptheorem{example}{Example}


\newcommand{\cds}{{\upshape{\textbf{cds}}}\xspace} 
\newcommand{\cdr}{{\upshape{\textbf{cdr}}}\xspace} 
%



\pagestyle{plain}
\setlength{\textwidth}{6.5in}
\setlength{\textheight}{9in}
\setlength{\headheight}{0in}
\setlength{\evensidemargin}{0in}
\setlength{\oddsidemargin}{0in}
\setlength{\topmargin}{0in}


\definecolor{alertcolor}{HTML}{D20155}
\definecolor{dkblue}{HTML}{FF7401}
\definecolor{mdblue}{HTML}{FFBD6E}
\definecolor{ppl}{HTML}{B202D9}
\definecolor{ltgrn}{HTML}{C8FDEC}
\definecolor{ltyl}{HTML}{FCFDDC}
\definecolor{darkorange}{HTML}{A94F00}
\definecolor{experiment}{HTML}{3A1E97}
\definecolor{dkgrn}{HTML}{016444}


\title{Context Directed Reversals and the Ciliate Decryptome} 
\author{C.L. Jansen, M. Scheepers, S.L. Simon and E. Tatum}
\date{\today}

\subjclass[2010]{05A05, 68P10, 91A46, 92D15, 97A20}
\keywords{Permutation sorting, context directed reversals, context directed block interchanges, normal play game, misere game, ciliate decryptome}

\begin{document}
\maketitle

\begin{abstract}  Prior studies of the efficiency of the block interchange (swap) and the reversal sorting operations on (signed) permutations identified specialized versions of the these operations. These specialized operations are here called context directed reversal, abbreviated \cdr, and context directed swap, abbreviated \cds. Prior works have also  characterized which (signed) permutations are sortable by \cdr or by \cds. 

It is now known that when a permutation $\pi$ is \cds sortable in $n$ steps, then any $n$ consecutive applicable \cds operations will sort $\pi$. Examples show that this is not the case for \cdr. This phenomenon is the focus of this paper. It is proven that if a signed permutation is \cdr sortable, then any \cdr fixed point of it is \cds sortable (the \cds Rescue Theorem). The \cds Rescue Theorem is discussed in the context of a mathematical model for ciliate micronuclear decryption. 

It is also known that if applications of \cds to a permutation $\pi$ reaches a \cds fixed point in $n$ steps, then any $n$  consecutive applicable \cds operations will terminate in a \cds fixed point of $\pi$. This is not the case for \cdr: It is proven that though for a given signed permutation the number of \cdr operations leading to different \cdr fixed points may be different from each other, the parity of the number of operations is the same (the \cdr Parity Theorem). This result provides a solution to two previously formulated decision problems regarding certain combinatorial games.

\end{abstract}

\section{Introduction}

The scope, efficiency and robustness of sorting algorithms are of high interest since sorting is used to prepare data for application of various information processing algorithms. The mathematical study of permutation sorting correspondingly has a long history. 

Applications of permutation sorting in biological studies have been stimulated by the discovery that often the positions of genes on the chromosomes of an organism A is a permutation of the positions of the corresponding genes on the chromosomes of another organism B. Dobzhansky and Sturtevant \cite{DS} proposed using the minimum number of reversals required to sort the gene order of organism A to that of organism B as a measure of the evolutionary distance between A and B. Hannenhalli and Pevzner \cite{HP} found an efficient algorithm for determining the minimum number of reversals when a signed permutation describes this relation between genes in A and in B. That work identified a particular class of reversals, named \emph{oriented reversals}, as instrumental to finding that minimum number. Other advances in the study of (oriented) reversals may be found in \cite{AB, BHS, TBS}. In an independent line of investigation this specific type of reversal has been postulated as one of the two sorting operations executed in ciliates during the process of converting a micronuclear precursor of a gene to its functional form. In \cite{EHPPR} and other ciliate literature this special reversal operation has been denoted \textsf{hi}, an abbreviation for \emph{hairpin inverted repeat}. We shall call this special reversal operation \emph{context directed reversal}, abbreviated \cdr.

Also block interchanges, and the special case of transpositions, have been studied as permutation sorting operations. Christie \cite{DC} studied the problem of finding the minimum number of block interchanges required to sort one permutation to another. A certain constrained class of block interchanges, called \emph{minimal block interchanges} in \cite{DC}, emerged as instrumental for efficient sorting by block interchanges. Again, independently, the model for ciliate micronuclear decryption postulates a constrained block interchange operation as a sorting operation towards accomplishing decryption of micronuclear precursors of genes. In ciliate literature, see for example \cite{EHPPR},  these constrained block interchanges are called \textsf{dlad} operations. Some instances of the minimal block interchanges identified by Christie are indeed \textsf{dlad} operations. We shall refer to a dlad operation as a \emph{context directed swap}, abbreviated \cds.

The \cds \emph{Inevitability Theorem}, a result from \cite{AHMMSTW}, shows that if a permutation is \cds sortable, then in fact indiscriminate applications of \cds will successfully sort the permutation. The advantage for the ciliate decryptome is that such a sorting operation does not require additional resources to direct strategic choices for successful sorting by \cds. However, the \cdr sorting operation does not have this inevitability feature: Indiscriminately applying \cdr sorting operations to a \cdr sortable (or reverse \cdr sortable) signed permutation may terminate in an unsuccessful sorting. This fact motivates several questions, including: (1) Do signed permutations that are \cdr sortable, but not indiscriminately \cdr sortable, actually occur in ciliates? (2) If ``{\tt yes}", how does the ciliate decryptome succeed in sorting such signed permutations? and (3) Does counting the number of \cdr operations required to sort a signed permutation $\pi$ entail finding a successful sequence of \cdr operations that sort $\pi$? 

Regarding Question (1): \cite{DP} reports the DNA sequences for the micronuclear precursors of the Actin I gene for the ciliate species \emph{Uroleptus pisces} 1 and \emph{Uroleptus pisces} 2. The first is available from \cite{genbank} under Accession Number AF508053.1, and is representable by the signed permutation
$\lbrack 1,\; 3,\;  -7,\;  -5,\;  14,\;  2,\;  4,\;  6,\;  9,\;  12,\;  -11,\;  -8,\;  13,\;  15,\;  -10\rbrack$. 
The second is available from \cite{genbank} under Accession Number AY373659.1, and is representable by the signed permutation $\lbrack  2,\;  4,\;  6,\;  9,\;  12,\;  -11,\;  -8,\;  13,\;  15,\;  -10,\; 1,\; 3,\;  -7,\;  -5,\;  14 \rbrack$.
Both are \cdr sortable, yet some sequences of applications of \cdr result in a \cdr fixed point different from the identity. This fact suggests that unless the applications of \cdr operations during micronuclear decryption follow a strategy not yet discovered in the laboratory, \cdr fixed points other than the identity would be encountered results of the decryption process.

Regarding Question (2): We prove the perhaps surprising result, Theorem \ref{cdsrescue}, named the \cds \emph{Rescue Theorem}, that any \cdr fixed point of a \cdr sortable signed permutation is a \cds sortable permutation. Thus, the \cds Inevitability Theorem of \cite{AHMMSTW} implies that the \cds sorting operation plays a rescue role in the ciliate decryptome.

Regarding Question (3): In prior work \cite{AB, HP, TBS}, finding the minimum number of reversals to sort a permutation included finding a strategic selection of \cdr operations that sort a \cdr sortable signed permutation. The \cds Inevitability Theorem plus the \cds Rescue Theorem provide another approach. The \cdr \emph{Steps Theorem}, Theorem \ref{indiscriminatecdr}, determines the number of \cdr applications required to sort a \cdr sortable signed permutation by indiscriminate applications of \cdr and \cds. 

Different \cdr fixed points of a given signed permutation are not necessarily reached in the same number of \cdr sorting steps. But the parity of the number of \cdr operations required to reach a \cdr fixed point is an invariant of each signed permutation - the \cdr \emph{Parity Theorem}, Theorem \ref{cdrparity}. This result implies a linear time solution for the \cdr Misere and the \cdr Normal Play decision problems for  combinatorial games featuring \cdr, defined previously in \cite{AHMMSTW}.

Our paper is organized as follows. After introducing basic terminology and the theoretical background information from prior work we prove the \cds Rescue Theorem and the \cdr Steps Theorem in Section 9, and the \cdr Parity Theorem in Section 10. Finally we discuss our results in the context of the ciliate micronuclear decryption model proposed in \cite{PER, PER1}.

For readers interested in the biological connections of this work we recommend the two textbooks \cite{EHPPR} and \cite{FLRTV}. 

\section{Notation and Terminology}

We consider both signed permutations and unsigned permutations. An unsigned permutation will simply be called a permutation. In either case we consider these as one-to-one and onto functions from the appropriate finite set to itself. 
For a permutation $\pi$ the notation 
\begin{equation}\label{eq:permnotation}
 \pi =   \lbrack a_1,\; \cdots,\; a_n\rbrack
\end{equation}
denotes that the permutation $\pi$ maps $a_i$ to $i$. For a signed permutation $\pi$ the notation in (\ref{eq:permnotation}) denotes that the signed permutation $\pi$ maps $a_i$ to $i$ and $-a_i$ to $-i$. The symbol $\textsf{S}_n$ denotes the set of permutations as in (\ref{eq:permnotation}), while $\textsf{S}^{\pm}_n$ denotes the set of such signed permutations.

The definitions of the \emph{context directed} sorting operations use the notion of a \emph{pointer}:
Consider a (signed) permutation 
$\alpha = \lbrack p_1, p_2, \ldots,p_i,\ldots,p_j,\ldots, p_n\rbrack$.
Consider the entry $p_i$ of $\alpha$ and say $p_i$ is the integer $k$. Then the \emph{head pointer}, or simply \emph{head} of $p_i$ is the ordered pair $(\vert k\vert,\; \vert k\vert +1)$, while the \emph{tail pointer}, or simply \emph{tail}, of $p_i$ is the ordered pair $(\vert k\vert-1,\vert k\vert)$.

In displays where pointer locations are emphasized we will use notation as follows
\[
   \alpha = \left\{\begin{tabular}{ll}
                  $\lbrack p_1, p_2, \ldots ,_{(k-1\;,k)}{p_i}_{(k,\;k+1)},\ldots ,{p_j},\ldots, p_n\rbrack$ & or alternately \\ 
                   $\lbrack p_1, p_2, \ldots ,^{t}{p_i}^{h},\ldots ,{p_j},\ldots, p_n\rbrack$ &
                  when $p_i = \vert p_i\vert  = k$, and\\
                   $\lbrack p_1, p_2, \ldots ,_{(k,\;k+1)}{p_i}_{(k-1,\;k)},\ldots ,{p_j},\ldots, p_n\rbrack$ & or alternately \\
$\lbrack p_1, p_2, \ldots ,^{h}{p_i}^{t},\ldots ,{p_j},\ldots, p_n\rbrack$ &
when $-p_i = \vert p_i\vert  = k$.\\
  \end{tabular}\right.
\]

\section{The oriented overlap graph of a signed permutation}

Consider a signed permutation $\alpha\in\textsf{S}^{\pm}_n$. 
For each pointer $(i,\; i+1)$ appearing in $\alpha$, draw an arc between the head and the tail occurrence of this pointer. One possible scenario is depicted in Figure \ref{fig:arcs}.
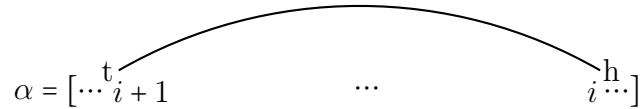
\begin{figure}[ht]
\begin{tikzpicture}[scale = 1.0]
\coordinate (v1) at (-3,3);
\coordinate(v1-) at (-3.45,3);
\coordinate (v2) at (3,3);
\coordinate (v2+) at (3.25,3);
\coordinate (v3) at (0,3);
\coordinate (v4) at (-4.1,3);
\coordinate (v5) at (3.4,3);
    \node[circle, inner sep=8pt] at (v1){$i+1$};
    \node[circle, inner sep=8pt] at (v2){$i$};
\node at (v2+) [above] {h};
\node at (v1-) [above] {t};

    \node[circle, inner sep=8pt] at (v3){$\cdots$};
    \node[circle, inner sep=8pt] at (v4){$\alpha = \lbrack \cdots$};
    \node[circle, inner sep=8pt] at (v5){$\cdots\rbrack$};
\draw [thick] (v2+)+(-0.15,.28) arc (60:120:6.40cm); 
\end{tikzpicture}
\caption{An arc between the head and tail occurences of a pointer in $\alpha$}\label{fig:arcs}
\end{figure}

The overlap graph $\mathcal{O}(\alpha) = (V,\mathcal{E})$ of signed permutation $\alpha$ has
\begin{itemize}
\item{vertex set $V$, the set of pointers of $\alpha$, and}
\item{edge set $\mathcal{E}$, the set of pairs $\{p,\; q\}$ of vertices $p$ and $q$ for which the arcs associated with $p$ and $q$ have nonempty intersection.}
\end{itemize}
Following \cite{HP, TBS} we designate a vertex of $\mathcal{O}(\alpha)$ as \emph{oriented} if the corresponding arc is between pointers of entries of opposite sign in $\alpha$. In figures oriented vertices are denoted by filled circles, while unoriented vertices are denoted by unfilled circles.

\begin{example}\label{ex:orientedcompT}
Constructing the overlap graph of the signed permutation
$T = \lbrack 1,\, -5,\, -2,\, 4,\, -3,\, 6 \rbrack$. 
\end{example}

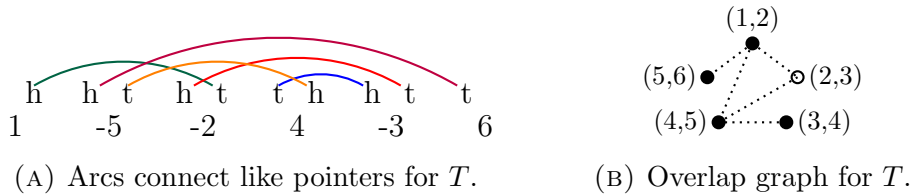
\begin{figure}[h]
\centering
\begin{subfigure}[b]{0.4\textwidth}
\begin{tikzpicture}[scale = 2.5]
\coordinate (-v4) at (-3.5,0); 
\coordinate (-v3) at (-3.0,0); 
\coordinate (-v2) at (-2.5,0); 
\coordinate (-v1) at (-2.0,0); 
\coordinate (v0) at (-1.5,0);   
\coordinate (v1) at (-1.0,0);   

\coordinate (h1) at (-3.4,-0.05);  
\coordinate (h5) at (-3.1,-0.05);  
\coordinate (t5) at (-2.9,-0.05); 
\coordinate (t2) at (-2.4,-0.05);    
\coordinate (h2) at (-2.6,-0.05);   
\coordinate (h4) at (-1.9,-0.05);    
\coordinate (t4) at (-2.1,-0.05);   
\coordinate (h3) at (-1.6,-0.05);  
\coordinate (t3) at (-1.4,-0.05); 
\coordinate (t6) at (-1.1,-0.05);    
\node at (-v4) [below] {1};
\node at (-v3) [below] {-5};
\node at (-v2) [below] {-2};
\node at (-v1) [below] {4};
\node at (v0) [below] {-3};
\node at (v1) [below] {6};

\node at (h1) [above] {h};
\node at (h2) [above] {h};
\node at (t2) [above] {t};
\node at (h3) [above] {h};
\node at (t3) [above] {t}; 
\node at (h4) [above] {h}; 
\node at (t4) [above] {t};
\node at (t5) [above] {t};
\node at (h5) [above] {h};
\node at (t6) [above] {t};

\draw [thick,  color=dkgrn] (t2)+(-0.05,.15) arc (60:120:0.95cm); 
\draw [thick,  color=red] (t3)+(-0.05,.15) arc (60:120:1.10cm); 
\draw [thick,  color=blue] (h3)+(-0.05,.15) arc (60:120:0.45cm); 
\draw [thick,  color=orange] (h4)+(-0.05,.15) arc (60:120:0.95cm); 
\draw [thick,  color=purple] (t6)+(-0.05,.15) arc (60:120:1.90cm); 
\end{tikzpicture}
\caption{Arcs connect like pointers for $T$. }\label{fig:OverlapGraphT1}
\end{subfigure}
\begin{subfigure}[b]{0.4\textwidth}
\centering
\begin{tikzpicture}[scale = .15]
\coordinate (p12) at (0,6);
\coordinate (p23) at (4,3); 
\coordinate (p34) at (3,-1); 
\coordinate (p45) at (-3,-1); 
\coordinate (p56) at (-4,3); 

\node at (p12) [above] {\footnotesize (1,2)};
\node at (p23) [right] {\footnotesize (2,3)};
\node at (p34) [right] {\footnotesize (3,4)};
\node at (p45) [left] {\footnotesize (4,5)};
\node at (p56) [left] {\footnotesize (5,6)};

\draw [thick, fill = black] (p12) circle (15 pt);
\draw [thick] (p23) circle (15 pt);
\draw [thick, fill = black] (p34) circle (15 pt);
\draw [thick, fill = black] (p45) circle (15 pt);
\draw [thick, fill = black] (p56) circle (15 pt);

\draw [thick, dotted] (p12) -- (p23);%
\draw [thick, dotted] (p23) -- (p45); %
\draw [thick, dotted] (p34) -- (p45);   %
\draw [thick, dotted] (p12) -- (p45);   %
\draw [thick, dotted] (p12) -- (p56);    %

\end{tikzpicture}
\caption{Overlap graph for $T$.}\label{fig:OverlapGraphFCS0}
\end{subfigure}
\caption{Construction of the overlap graph for $T$.}\label{fig:OverlapGraphFCS1}
\end{figure}

Let $\mathcal{G} = (V,\mathcal{E})$ be a graph. Declare vertices $x$ and $y$ of $\mathcal{G}$ \emph{in reach} if there are vertices $x_0,\; x_1,\; \cdots,\; x_m$ such that $x=x_0$ and $y= x_m$, and for each $i<m$ we have $\{x_i,\; x_{i+1}\}\in\mathcal{E}$. The ``in reach" relation is an equivalence relation on the vertex set of $\mathcal{G}$. If an equivalence class of this relation has more than one element it is called a \emph{component} of $\mathcal{G}$. If this equivalence relation has only one equivalence class, we say that $\mathcal{G}$ is a \emph{connected} graph. 
If $(\mathcal{G},\; f)$ is an oriented graph, then a component of $\mathcal{G}$ is said to be an \emph{oriented component} if some vertex belonging to the component is oriented. The member of an equivalence class with only one element is said to be an \emph{isolated} vertex.

\begin{example}\label{ex:unorientedcomp}
The signed permutation
$S = \lbrack-6 ,\;3 ,\;-4 ,\;2 ,\;5 ,\;-1 ,\;7 ,\;9 ,\;8,\;10 \rbrack$. 
The overlap graph of $S$ follows:
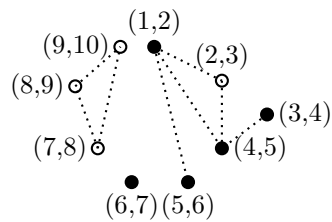
\begin{figure}[ht]
\begin{tikzpicture}[scale = .15]
\coordinate (p12) at (-4,6);
\coordinate (p23) at (2,3); 
\coordinate (p34) at (6,0); 
\coordinate (p45) at (2,-3); 
\coordinate (p56) at (-1,-6); 
\coordinate (p67) at (-6,-6); 
\coordinate (p78) at (-9,-3);
\coordinate (p89) at (-11,2.5); 
\coordinate (p910) at (-7,6);

\node at (p12) [above] {\footnotesize (1,2)};
\node at (p23) [above] {\footnotesize (2,3)};
\node at (p34) [right] {\footnotesize (3,4)};
\node at (p45) [right] {\footnotesize (4,5)};
\node at (p56) [below] {\footnotesize (5,6)};
\node at (p67) [below] {\footnotesize (6,7)};
\node at (p78) [left] {\footnotesize (7,8)};
\node at (p89) [left] {\footnotesize (8,9)};
\node at (p910) [left] {\footnotesize (9,10)};

\draw [thick, fill = black] (p12) circle (15 pt);
\draw [thick] (p23) circle (15 pt);
\draw [thick, fill = black] (p34) circle (15 pt);
\draw [thick, fill = black] (p45) circle (15 pt);
\draw [thick, fill = black] (p56) circle (15 pt);
\draw [thick, fill = black] (p67) circle (15 pt);
\draw [thick] (p78) circle (15 pt);
\draw [thick] (p89) circle (15 pt);
\draw [thick] (p910) circle (15 pt);

\draw [thick, dotted] (p12) -- (p23);%
\draw [thick, dotted] (p23) -- (p45); %
\draw [thick, dotted] (p34) -- (p45);   %
\draw [thick, dotted] (p12) -- (p45);   %
\draw [thick, dotted] (p12) -- (p56);    %
\draw [thick, dotted] (p78) -- (p89); %
\draw [thick, dotted] (p78) -- (p910); %
\draw [thick, dotted] (p89) -- (p910);%

\end{tikzpicture}
\caption{Overlap graph for $S$.}\label{fig:OverlapGraphFCS2}
\end{figure}\\
The oriented graph in Figure~\ref{fig:OverlapGraphFCS2}  has an isolated vertex (that is oriented), an unoriented component, and an oriented component.
\end{example}

\section{Context directed reversals}

Let a signed permutation $\alpha$ with entries $x$ and $y$ as well as a pointer $(i,i+1)$  be given. Suppose that the entries $x$ and $y$ of $\alpha$ each is associated with the pointer $(i,i+1)$\footnote{Thus, $(i,\;i+1)$ is the head pointer of one, and the tail pointer of the other, and $\{i,\; i+1\} = \{\vert x\vert,\; \vert y\vert\}$.}. We define $\beta = {\text{\cdr}}_{(i,i+1)}(\alpha)$ by cases:

{\flushleft{\underline{Case 1:}}} $\alpha = \lbrack \cdots,\; _{(i,\;i+1)}x,\; \cdots,\; y_{(i,\;i+1)},\; \cdots\rbrack$ or $\alpha = \lbrack \cdots,\; x_{(i,\;i+1)},\; \cdots,\; _{(i,\;i+1)}y,\; \cdots\rbrack$\\
In this case $x\cdot y > 0$\footnote{In other words, $x$ and $y$ have the same sign.}, and we define $\beta = \alpha$. 

{\flushleft{\underline{Case 2:}}} $\alpha = \lbrack \cdots,\; a,\;{\mathbf{_{(i,\;i+1)}x,\dots,\;{b}}},\; _{(i,\;i+1)}y,\; \cdots\rbrack$.\\
In this case $\beta$ is obtained from $\alpha$ by reversing the order of all entries starting at $x$ and ending at $b$, and changing the signs of these entries. Thus,
\[
   \beta = {\text{\cdr}}_{(i,i+1)}(\alpha) = \lbrack \cdots,\; a,\; {\mathbf{-b,\dots,\; -x_{(i,\;i+1)}}},\; _{(i,\;i+1)}y,\; \cdots\rbrack
\]
{\flushleft{\underline{Case 3:}}} $\alpha = \lbrack \cdots,\; x_{(i,\;i+1)},\; {\mathbf{a,\;\dots,\; y_{(i,\;i+1)}}},\;b,\; \cdots\rbrack$.\\
In this case $\beta$ is obtained from $\alpha$ by reversing the order of all entries starting at $a$ and ending at $y$, and changing the signs of these entries. Thus,
\[
   \beta = {\text{\cdr}}_{(i,i+1)}(\alpha) = \lbrack \cdots,\; x_{(i,\;i+1)},\;{\mathbf{ _{(i,\;i+1)}-y,\;\dots,\; -a}},\; b,\cdots,\; \rbrack
\]

\begin{example} In the signed permutation $\pi = \lbrack -2,\; 1,\; -4,\; 3\rbrack$ the pointer $(2,\, 3)$ is the head pointer of $-2$ and the tail pointer of $3$, entries with opposite sign. Thus the context directed reversal $ {\text{\cdr}}_{(2,3)}$ is applicable to $\pi$, and ${\text{\cdr}}_{(2,3)}(\pi) = \lbrack 4,\; -1,\; 2,\; 3\rbrack$.
\end{example}

Context directed reversals have been called \emph{oriented reversals} in \cite{HP,TBS}, and \textbf{hi}, the hairpin inversion in \cite{EHPPR} and related literature.
A fundamental theorem from \cite{HP} implies
\begin{theorem}[Hannenhalli-Pevzner]\label{cdrsortfundthm}
If the overlap graph of the signed permutation $\alpha$ has no unoriented components, then $\alpha$ is sortable by applications of \text{\cdr}. 
\end{theorem}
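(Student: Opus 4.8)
The plan is to argue through the overlap graph $\mathcal{O}(\alpha)$, tracking how an applicable \cdr changes it, and then to induct on the number of pointers still to be resolved. The first thing I would record is that a \cdr that actually alters $\alpha$ (Cases 2 and 3) is applicable at the pointer $(i,i+1)$ precisely when that pointer occurs as the head of one entry and the tail of another entry \emph{of opposite sign} --- equivalently, precisely when the corresponding vertex of $\mathcal{O}(\alpha)$ is oriented. A fully sorted $\alpha$ corresponds to an overlap graph all of whose vertices are isolated and unoriented (these are the realized adjacencies, which need no work), so the entire task reduces to showing that, under the hypothesis, we can keep applying \cdr operations, each resolving one pointer, until no nontrivial or oriented vertex remains.

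The second step is to compute the effect of a single \cdr on the overlap graph. I would verify, by direct bookkeeping on how the reversal rearranges the entries and negates their signs, that performing the \cdr at an oriented vertex $v$ yields a signed permutation whose overlap graph is obtained from $\mathcal{O}(\alpha)$ by three operations: (i) \emph{local complementation} at $v$, toggling the presence of each edge between two vertices of the neighborhood $N(v)$; (ii) flipping the oriented/unoriented label of every vertex in $N(v)$; and (iii) deleting $v$, whose pointer is now realized. Everything outside $N(v)\cup\{v\}$, vertices and edges alike, is left untouched. This is the standard overlap-graph description of an oriented reversal, and the verification, though routine, is the technical backbone of the argument.

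The third step is the induction itself, which rests on a \emph{safe reversal} lemma: if $\mathcal{O}(\alpha)$ has no unoriented component and still carries an oriented vertex, then some oriented $v$ may be chosen so that the \cdr at $v$ again leaves a graph with no unoriented component. Granting this, the theorem is immediate: each \cdr deletes one vertex and preserves the hypothesis, so after finitely many steps the graph reduces to isolated unoriented vertices and $\alpha$ is sorted. Because operations (i)--(iii) affect only the component containing $v$ (an isolated oriented vertex being resolved trivially, since its neighborhood is empty), it suffices to arrange the choice of $v$ to work inside a single oriented component, every other component being carried along unchanged.

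The hard part is the safe reversal lemma, and this is where I expect the bulk of the work to go. The danger lies entirely in step (ii): flipping the labels of $N(v)$, possibly followed by the splitting effect of the local complementation in (i), could conceivably strand an all-unoriented nontrivial component. Following Hannenhalli--Pevzner, I would pick $v$ within its oriented component by an extremal rule --- for instance an oriented vertex adjacent to an unoriented neighbor, or one maximal in a suitable ordering of the oriented vertices --- and then argue by cases on the local structure around $v$ that enough orientations survive or are newly created to keep every remaining nontrivial component oriented. This case analysis, rather than the termination count or the transformation formula, is the genuine obstacle; once it is in place the theorem follows by a one-line induction, and indeed it is exactly the content of the cited result of \cite{HP}.
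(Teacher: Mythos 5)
The paper does not prove this statement at all---it is imported verbatim from Hannenhalli--Pevzner \cite{HP}---so there is no in-paper proof to measure you against. What the paper does do, in its Sections 5--7, is set up exactly the machinery your outline describes: your step (i)--(iii) description of how an oriented reversal acts on the overlap graph is the operation $\textbf{gcdr}((\mathcal{G},f),p)=\textsf{lc}((\mathcal{G},f),\textsf{N}(p))$ together with Theorem \ref{cdrgcdr} (the commuting diagram), and your ``safe reversal lemma'' is precisely Theorem \ref{gcdrtheorem}, whose iteration is Corollary \ref{gcdrtotal}. So your architecture is the standard and correct one, and it is the same route the paper's framework suggests.

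The genuine gap is that the safe reversal lemma is the entire content of the theorem, and your proposal only gestures at it: ``pick $v$ by an extremal rule \ldots and argue by cases'' is a statement of intent, not an argument. Everything else in your outline (the termination count, the local-complementation formula, the observation that an applicable \cdr corresponds to an oriented vertex) is routine bookkeeping; the theorem lives or dies on proving that some oriented vertex can always be chosen whose reversal leaves no unoriented component, and that proof (Theorem 4 of \cite{HP}, Theorem 1 of \cite{TBS}) is a delicate case analysis that you have not supplied. Two smaller points to tighten if you do write it out: first, $\textbf{gcdr}$ leaves $v$ as an isolated unoriented vertex rather than deleting it, so your termination measure should be the number of non-isolated or oriented vertices; second, the equivalence ``all vertices isolated and unoriented $\Leftrightarrow$ $\alpha$ sorted'' is only true with the standard framing conventions (extending by sentinel entries so that every pointer, including $(0,1)$ and $(n,n+1)$, contributes an arc)---without that, $\lbrack 2,\;1\rbrack$ already has an all-isolated, all-unoriented overlap graph while being unsorted and un-sortable, so the base case of your induction needs this bookkeeping made explicit.
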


Another fundamental theorem from \cite{HP} implies  (see the remarks around \cite{TBS} Theorem 2):
\begin{theorem}[Hannenhalli-Pevzner]\label{cdrfundthm}
If the signed permutation $\alpha$ is sortable by applications of \text{\cdr}, then all sequences of successive \text{\cdr} operations that sort $\alpha$ are of the same length.
\end{theorem}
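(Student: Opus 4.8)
The plan is to exhibit an integer-valued invariant of signed permutations that changes by exactly one unit under each \cdr operation that is actually performed, and whose value at the identity depends only on $n$. Once such an invariant is in hand, the length of any sorting sequence must equal the difference between its value at $\alpha$ and its value at the identity, and since that difference does not depend on which operations are chosen, all sorting sequences are forced to have the same length.

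The natural invariant is the number of cycles of the \emph{breakpoint graph} (reality--desire diagram) of a signed permutation. First I would recall the standard construction: encode $\alpha\in\textsf{S}^{\pm}_n$ as an unsigned permutation on $\{0,1,\ldots,2n+1\}$ by replacing each positive entry $x$ with the block $2x-1,2x$ and each negative entry $-x$ with $2x,2x-1$, framed by $0$ on the left and $2n+1$ on the right. Drawing the ``reality'' edges (joining consecutive positions) and the ``desire'' edges (joining the pairs we want adjacent) yields a disjoint union of cycles; let $c(\alpha)$ denote their number. This quantity is manifestly determined by $\alpha$ alone, and for the identity $\iota$ one computes the fixed value $c(\iota)=n+1$.

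The crux is the lemma that \emph{each applied \cdr operation (Case 2 or Case 3) increases $c$ by exactly $1$}, and this is where the orientation hypothesis does the work. The two occurrences of the pointer $(i,i+1)$ to which the \cdr is applied are precisely the endpoints of a single desire edge $g$, and the reversal prescribed in Cases 2 and 3 is exactly the reversal acting on the two reality edges incident to $g$. Because a \cdr is applicable only when these two occurrences have opposite sign — that is, only when the corresponding vertex of $\mathcal{O}(\alpha)$ is oriented — the edge $g$ is an oriented desire edge, and the classical Hannenhalli--Pevzner fact is that a reversal on an oriented desire edge is \emph{proper}: it splits the cycle through $g$ into two, raising $c$ by one. (In Case 1 the two occurrences have the same sign, $\beta=\alpha$, and no operation is performed.) The hard part will be verifying that the combinatorial reversal described in Cases 2 and 3 really coincides with this proper, cycle-splitting reversal: this requires a careful bookkeeping translation between the pointer formalism of Section 4 — tracking how the signs and positions transform — and the black/gray edge incidences of the breakpoint graph.

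Granting the lemma, the conclusion is immediate. Suppose $\alpha$ is sorted to the identity by a sequence of $L$ successive \cdr operations $\alpha=\alpha_0\to\alpha_1\to\cdots\to\alpha_L=\iota$. Each step raises the cycle count by exactly $1$, so $c(\alpha_L)=c(\alpha_0)+L$, i.e. $n+1=c(\alpha)+L$, whence $L=n+1-c(\alpha)$, a number depending only on $\alpha$. Therefore every sequence of successive \cdr operations that sorts $\alpha$ has this same length, as claimed. Should the breakpoint-graph translation prove cumbersome, an equivalent route is to argue directly in $\mathcal{O}(\alpha)$, where a \cdr on an oriented vertex acts by local complementation at that vertex followed by its removal and one tracks the same invariant; but the reality--desire formulation makes the ``$+1$ per step'' statement most transparent.
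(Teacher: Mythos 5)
The paper does not prove this theorem itself; it states it as a citation to Hannenhalli--Pevzner (with a pointer to the remarks around Theorem 2 of \cite{TBS}), and your argument is precisely the standard proof from those sources: the cycle count $c$ of the breakpoint graph is a monovariant that increases by exactly one under each applied \cdr (since \cdr is exactly the reversal induced by an oriented gray edge, and such reversals are proper), forcing every sorting sequence to have length $n+1-c(\alpha)$. The single step you defer --- that the reversal induced by an oriented gray edge splits its cycle --- is exactly the lemma the cited sources establish, so the proposal is correct and follows the same route the paper implicitly relies on.
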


\section{Local complementation of oriented graphs, and the operation \textbf{gcdr}}

The operation \cdr on a signed permutation can be simulated by a corresponding operation on the overlap graph of the signed permutation. This operation on arbitrary graphs, introduced in the prior work \cite{AB, HP, TBS}, is described next.

For a graph $\mathcal{G} = (V,\; \mathcal{E})$ and a vertex two-coloring $f:V\rightarrow\{0,\; 1\}$ the pair $(\mathcal{G},\; f)$ is said to be an \emph{oriented} graph. Vertices $v$ with $f(v)=0$ are said to be \emph{oriented vertices}, while the vertices $v$ with $f(v)=1$ are called \emph{unoriented} vertices.
For a given set $S$ and an oriented graph $(\mathcal{G},f)$ we define the oriented graph
\[
  (\mathcal{G}^{\prime},\; f^{\prime}) = \textsf{lc}((\mathcal{G},\; f),\; S)
\]
as follows: 
\begin{enumerate}
\item{$\mathcal{G}^{\prime} = (V^{\prime},\mathcal{E}^{\prime})$, where $V^{\prime} = V$ and $\mathcal{E}^{\prime} = (\mathcal{E}\setminus \lbrack S\rbrack^{2}) \cup (\lbrack V\cap S\rbrack^{2}\setminus \mathcal{E})$.}
\item{$f^{\prime}:V^{\prime}\rightarrow\{0,\; 1\}$ is defined by
   \[
      f^{\prime}(v) = \left\{\begin{tabular}{ll}
                                          $1 - f(v)$ & if $v \in S$ \\
                                          $f(v)$       & otherwise 
                                        \end{tabular}
                               \right.
   \]
}
\end{enumerate}
The oriented graph $(\mathcal{G}^{\prime},f^{\prime})$ is said to be the $S$-\emph{local complement} of oriented graph $(\mathcal{G},f)$. It is evident that when $S\cap V = \emptyset$, then $(\mathcal{G},f) = \textsf{lc}((\mathcal{G},\; f),\; S)$. Thus we may assume that $S\subseteq V$.
An example of an oriented graph on seven vertices is given in Figure \ref{fig:SampleOrientedGraph}. 
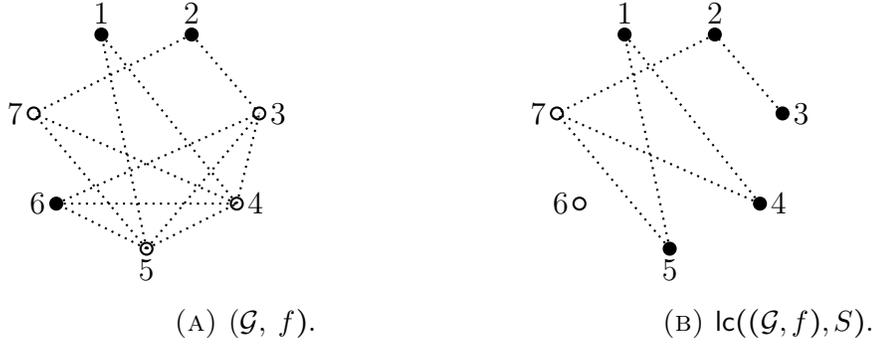
\begin{figure}[h!]
\centering
\begin{tabular}{cc}
\begin{subfigure}[b]{0.4\textwidth}
\begin{tikzpicture}[scale = .15]
\coordinate (green) at (-4,9);
\coordinate (red) at (4,9); 
\coordinate (orange) at (10,2); 
\coordinate (blue) at (8,-6); 
\coordinate (purple) at (0,-10); 
\coordinate (gray) at (-8,-6); 
\coordinate (yellow) at (-10,2);

\node at (green) [above] {1};
\node at (red) [above] {2};
\node at (orange) [right] {3};
\node at (blue) [right] {4};
\node at (purple) [below] {5};
\node at (gray) [left] {6};
\node at (yellow) [left] {7};

\draw [thick, fill = black] (green) circle (15 pt);
\draw [thick, fill = black] (red) circle (15 pt);
\draw [thick] (orange) circle (15 pt);
\draw [thick] (blue) circle (15 pt);
\draw [thick] (purple) circle (15 pt);
\draw [thick, fill = black] (gray) circle (15 pt);
\draw [thick] (yellow) circle (15 pt);

\draw [thick,dotted] (purple) -- (green);%
\draw [thick,dotted] (blue) -- (green);   %
\draw [thick,dotted] (red) -- (orange);   %
\draw [thick,dotted] (red) -- (yellow);    %
\draw [thick,dotted] (blue) -- (orange); %
\draw [thick,dotted] (purple) -- (orange);%
\draw [thick,dotted] (gray) -- (orange);  %
\draw [thick,dotted] (blue) -- (yellow);  %
\draw [thick,dotted] (purple) -- (blue);  %
\draw [thick,dotted] (gray) -- (blue);     %
\draw [thick,dotted] (purple) -- (yellow);%
\draw [thick,dotted] (purple) -- (gray);%

\end{tikzpicture}
\caption{($\mathcal{G},\; f)$. }\label{fig:SampleOrientedGraph}
\end{subfigure}
&
\begin{subfigure}[b]{0.4\textwidth}
\begin{tikzpicture}[scale = .15]
\coordinate (red) at (0,-10);       
\coordinate (orange) at (-8,-6); 
\coordinate (yellow) at (-10,2);  
\coordinate (green) at (-4,9);     
\coordinate (blue) at (4, 9);       
\coordinate (purple) at (10,2);  
\coordinate (gray) at (8,-6);     

\node at (red) [below] {5};
\node at (orange) [left] {6};
\node at (yellow) [left] {7};
\node at (green) [above] {1};
\node at (blue) [above] {2};
\node at (purple) [right] {3};
\node at (gray) [right] {4};

\draw [thick, fill = black] (red) circle (15 pt);
\draw [thick, fill = white] (orange) circle (15 pt);
\draw [thick, fill = white] (yellow) circle (15 pt);
\draw [thick, fill = black] (green) circle (15 pt);
\draw [thick, fill = black] (blue) circle (15 pt);
\draw [thick, fill = black] (purple) circle (15 pt);
\draw [thick, fill = black] (gray) circle (15 pt);

\draw [thick,dotted] (red) -- (green);
\draw [thick,dotted] (red) -- (yellow);
\draw [thick,dotted] (blue) -- (yellow);
\draw [thick,dotted] (gray) -- (green);
\draw [thick,dotted] (purple) -- (blue);
\draw [thick,dotted] (gray) -- (yellow);

\end{tikzpicture}
\caption{$\textsf{lc}((\mathcal{G},f),S)$.}\label{fig:SampleLocCompl}

\end{subfigure}
\end{tabular}
\caption{Oriented vertices marked in black. $S$ is the set $\{3,\; 4,\; 5,\; 6\}$.}\label{fig:SampleLocComplMain}
\end{figure}
 The resulting graph $\textsf{lc}((\mathcal{G},f),S)$ is displayed in Figure \ref{fig:SampleLocCompl}.

Let $(\mathcal{G},f)$ be an oriented graph. Consider a vertex $p$ of $\mathcal{G}$. If $p$ is not an oriented vertex of $\mathcal{G}$, then declare $(\mathcal{G}^{\prime},\;f^{\prime}) = (\mathcal{G},\; f)$. However, if $p$ is an oriented vertex of $\mathcal{G}$, then define: 
\[ 
  \textsf{N}(p) = \{q:\; \{p,\; q\} \mbox{ is an edge of } \mathcal{G}\}\bigcup\{p\}. 
\]
Finally, define
\[
  \textbf{gcdr}((\mathcal{G},f),p) = \textbf{lc}((\mathcal{G},f), \textsf{N}(p)).
\]
In \cite{TBS} the authors use the notation $\mathcal{G}/p$ for $\textbf{gcdr}((\mathcal{G},f),p)$.
Observe that if in $\mathcal{G}$ the vertex $p$ is oriented, then in $\textbf{gcdr}((\mathcal{G},f),p)$ the vertex $p$ is unoriented and isolated.

\section{Simulating \textbf{cdr} on the oriented overlap graph of a signed permutation}

We next relate an application of the \textbf{cdr} sorting operation on a signed permutation $\alpha$ to an application of \textbf{gcdr} on the oriented overlap graph of $\alpha$.

\begin{example}\label{ex:onecomp}
The signed permutation $\alpha = \lbrack 3,-8,-2,5,1,-7,4,6\rbrack$.
\end{example}

\begin{figure}[h!]
\begin{subfigure}[b]{0.4\textwidth}
\centering
\begin{tikzpicture}[scale = .10]
\coordinate (green) at (-4,9);
\coordinate (red) at (4,9); 
\coordinate (orange) at (10,2); 
\coordinate (blue) at (8,-6); 
\coordinate (purple) at (0,-10); 
\coordinate (gray) at (-8,-6); 
\coordinate (yellow) at (-10,2);

\node at (green) [above] {(1,2)};
\node at (red) [right] {(2,3)};
\node at (orange) [right] {(3,4)};
\node at (blue) [right] {(4,5)};
\node at (purple) [below] {(5,6)};
\node at (gray) [left] {(6,7)};
\node at (yellow) [left] {(7,8)};

\draw [thick, fill = black] (green) circle (20 pt);
\draw [thick, fill = black] (red) circle (20 pt);
\draw [thick] (orange) circle (20 pt);
\draw [thick] (blue) circle (20 pt);
\draw [thick] (purple) circle (20 pt);
\draw [thick, fill = black] (gray) circle (20 pt);
\draw [thick] (yellow) circle (20 pt);

\draw [thick,dotted] (purple) -- (green);%
\draw [thick,dotted] (blue) -- (green);   %
\draw [thick,dotted] (red) -- (orange);   %
\draw [thick,dotted] (red) -- (yellow);    %
\draw [thick,dotted] (blue) -- (orange); %
\draw [thick,dotted] (purple) -- (orange);%
\draw [thick,dotted] (gray) -- (orange);  %
\draw [thick,dotted] (blue) -- (yellow);  %
\draw [thick,dotted] (purple) -- (blue);  %
\draw [thick,dotted] (gray) -- (blue);     %
\draw [thick,dotted] (purple) -- (yellow);%
\draw [thick,dotted] (purple) -- (gray);%

\end{tikzpicture}
\caption{The overlap graph $(\mathcal{G},f)$ of  $\alpha$.}\label{fig:Samplecdrmovegraph}
\end{subfigure}
\begin{subfigure}[b]{0.4\textwidth}
\centering
\begin{tikzpicture}[scale = .10]
\coordinate (green) at (-4,9);
\coordinate (red) at (4,9); 
\coordinate (orange) at (10,2); 
\coordinate (blue) at (8,-6); 
\coordinate (purple) at (0,-10); 
\coordinate (gray) at (-8,-6); 
\coordinate (yellow) at (-10,2);

\node at (green) [above] {(1,2)};
\node at (red) [right] {(2,3)};
\node at (orange) [right] {(3,4)};
\node at (blue) [right] {(4,5)};
\node at (purple) [below] {(5,6)};
\node at (gray) [left] {(6,7)};
\node at (yellow) [left] {(7,8)};

\draw [thick, fill = black] (green) circle (20 pt);
\draw [thick, fill = black] (red) circle (20 pt);
\draw [thick, fill = black] (orange) circle (20 pt);
\draw [thick, fill = black] (blue) circle (20 pt);
\draw [thick, fill = black] (purple) circle (20 pt);
\draw [thick, fill = white] (gray) circle (20 pt);
\draw [thick, fill = white] (yellow) circle (20 pt);

\draw [thick,dotted] (purple) -- (green);%
\draw [thick,dotted] (blue) -- (green);   %

\draw [thick,dotted] (red) -- (orange);   %
\draw [thick,dotted] (red) -- (yellow);    %

\draw [thick,dotted] (blue) -- (yellow);  %
\draw [thick,dotted] (purple) -- (yellow);%

\end{tikzpicture}
\caption{Overlap graph of $\text{\cdr}_{(6,7)}(\alpha)$.}\label{fig:Samplemoveongraph}
\end{subfigure}
\caption{$\text{\cdr}_{(6,7)}(\alpha)$ in Figure \ref{fig:Samplemoveongraph} agrees with $\textbf{gcdr}((G,f),(6,7))$.}\label{fig:cdrandgcdr}
\end{figure}

Consider $\mathcal{M}_n$, the set of oriented graphs on the set of $n$ pointers associated with elements of $\textsf{S}^{\pm}_n$. Let 
\[
   \Xi_n:\textsf{S}^{\pm}_n \longrightarrow \mathcal{M}_n
\]
denote the map that associates with each signed permutation $\alpha\in\textsf{S}^{\pm}_n$ its corresponding oriented overlap graph. 
The following theorem has been proven in \cite{HP}, and it shows that  the diagram in Figure \ref{cdrcommutativediagram} commutes:
\begin{theorem}\label{cdrgcdr} Let $n$ be a positive integer and let $\alpha$ be a signed permutation in $\textsf{S}^{\pm}_n$ and let $p$ be a pointer of $\alpha$. Then
$\Xi_n(\textbf{cdr}_p(\alpha)) = \textbf{gcdr}(\Xi_n(\alpha),p)$.
\end{theorem}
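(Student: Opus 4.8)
The plan is to reduce the claim to three separate bookkeeping checks about the oriented overlap graph, namely that $\textbf{cdr}_p$ and $\textbf{gcdr}(\,\cdot\,,p)$ induce the same change on the vertex set, on the orientation coloring $f$, and on the edge set $\mathcal{E}$. First I would dispose of the degenerate case: by the definition of $\textbf{cdr}$, the two occurrences of $p=(i,i+1)$ are the head of one entry and the tail of another, and $p$ acts nontrivially (Cases 2 and 3) precisely when these entries have opposite signs, i.e.\ precisely when $p$ is an oriented vertex of $\mathcal{O}(\alpha)$. If they have the same sign then $\textbf{cdr}_p(\alpha)=\alpha$ (Case 1), while $p$ is unoriented, so $\textbf{gcdr}(\Xi_n(\alpha),p)=\Xi_n(\alpha)$ by definition; both sides agree. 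So I may assume $p$ is oriented. The vertex set is immediate: for any $\beta\in\textsf{S}^{\pm}_n$ the internal pointers are exactly $\{(j,j+1):1\le j\le n-1\}$, each occurring once as a head and once as a tail, so $\Xi_n(\textbf{cdr}_p(\alpha))$ and $\Xi_n(\alpha)$ share the same vertex set $V$, and $\textsf{lc}$ preserves $V$; thus $V'=V$ on both sides. It then remains to match $f'$ and $\mathcal{E}'$ with the $\textsf{lc}$ rule at $S=\textsf{N}(p)$.

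For the orientation, the key observation I would establish is geometric: the arc of a pointer $q=(j,j+1)$ joins the head occurrence sitting at the entry $\pm j$ and the tail occurrence sitting at the entry $\pm(j+1)$, so the two endpoints of arc $q$ live exactly at the two entries defining $q$, and $q$ is oriented iff these two entries carry opposite signs. A context directed reversal flips the sign of an entry iff that entry lies in the reversed block, and by inspection of Cases 2 and 3 the reversed block is exactly the set of entries whose pointer occurrences lie strictly between the two occurrences of $p$. Hence the orientation of $q$ flips iff exactly one endpoint of arc $q$ lies in that interior region, i.e.\ iff arc $q$ crosses arc $p$, i.e.\ iff $q$ is adjacent to $p$ in $\mathcal{O}(\alpha)$. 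A direct check of Cases 2 and 3 shows that the two occurrences of $p$ itself become adjacent and the two entries meeting at $p$ acquire equal signs, so $p$ turns unoriented (and, as I verify below, isolated). Since $\textsf{N}(p)$ consists of $p$ together with its neighbors, the orientation flips exactly on $\textsf{N}(p)$, matching the definition $f'(v)=1-f(v)$ for $v\in S$ and $f'(v)=f(v)$ otherwise.

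For the edges, I would model the reversal as a reflection. Identifying the two ends of the line of $2(n-1)$ pointer positions into a circle, the two occurrences of $p$ split it into two arcs $A$ (the interior of $p$) and $B$ (the exterior), and the reversal reverses the cyclic order of the points of $A$ while fixing the points of $B$; adjacency of two vertices is the crossing (interleaving) of their arcs, a function of the cyclic order of their four endpoints. For vertices $q,r\neq p$ I would run the case analysis on how many of the four endpoints lie in $A$. If neither $q$ nor $r$ crosses $p$, or if exactly one of them does, then reversing the $A$-order does not change the relative cyclic order of the four endpoints (reversing the order of at most one of them, or of a full chord lying inside $A$, preserves alternation), so the crossing of $q$ and $r$ is unchanged. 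If both $q$ and $r$ cross $p$, then each contributes exactly one endpoint to $A$; reflecting $A$ swaps the relative order of these two $A$-endpoints while the two $B$-endpoints keep their order, which negates the alternation condition, so the crossing toggles. This is precisely $\mathcal{E}'=(\mathcal{E}\setminus[S]^2)\cup([V\cap S]^2\setminus\mathcal{E})$ with $S=\textsf{N}(p)$: pairs inside $\textsf{N}(p)$ toggle and all others are fixed; and since every former neighbor of $p$ is toggled off while non-neighbors stay off, $p$ ends isolated, as claimed.

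I expect the edge computation to be the main obstacle, for two reasons. First, one must check that the reversal really does act as a clean reflection of the interior arc $A$, paying attention to the fact that one occurrence of $p$ sits at the boundary of the reversed block and is shared with the first block entry, so that no endpoint of a vertex $q\neq p$ is ever confused with an occurrence of $p$; this is where the precise position bookkeeping of Cases 2 and 3 is needed. Second, Cases 2 and 3 are genuinely the two mirror arrangements of the head and tail occurrences of $p$, but since the reflection model treats $A$ and $B$ symmetrically, a single crossing analysis covers both; I would state this explicitly rather than repeating the argument. Assembling the three matched changes (vertex set, coloring $f'$, edges $\mathcal{E}'$) yields $\Xi_n(\textbf{cdr}_p(\alpha))=\textbf{gcdr}(\Xi_n(\alpha),p)$, completing the proof.
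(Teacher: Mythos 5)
The paper does not actually prove this statement: Theorem \ref{cdrgcdr} is quoted from Hannenhalli--Pevzner \cite{HP}, with the remark that a good exposition appears in Section 2 of \cite{TBS}. So there is no in-paper proof to compare against; what you have written is a self-contained direct verification, and it is essentially the standard argument from that literature. Your decomposition into vertex set, orientation coloring, and edge set is the right one, and the three checks are sound: the degenerate case matches the two definitions' identity clauses; a pointer $q$ changes orientation exactly when one of its two carrying entries lies in the reversed block, i.e.\ exactly when arc $q$ crosses arc $p$, which together with the direct computation for $p$ itself shows $f$ flips precisely on $\textsf{N}(p)$; and the case analysis on how many endpoints of $q$ and of $r$ lie in the interior of $p$ gives toggling exactly for pairs of neighbors of $p$, which is local complementation at $\textsf{N}(p)$. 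The one step you flag but do not carry out should be written down, since it is the only place the signed structure genuinely enters the edge computation: reversing the block \emph{and} negating its entries acts as an exact reflection on the subsequence of pointer occurrences, because negating an entry swaps which of its two pointers sits on its left side and which on its right; without the sign change each entry's two occurrences would keep their internal order and the reflection model would fail. Once that is recorded, your observation that an occurrence of $q\neq p$ lies in the reversed block if and only if it lies strictly between the two occurrences of $p$ (the boundary position being occupied by $p$ itself) closes the remaining gap, and the argument is complete.
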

A good exposition of this result is given in Section 2 of \cite{TBS}.

\tikzstyle{line} = [draw, -latex']

\begin{figure}[ht]
\centering
  \begin{tikzpicture}[node distance = 3cm, auto]
    \node (init) {$\alpha$}; 
    \node [below of=init] (mginit) {$\Xi_n(\alpha)$};

    \node [right of=init, xshift = 2cm] (init2) {$\alpha^{\prime}$};
    \node [below of=init2] (mginit2) {$\Xi_n(\alpha^{\prime})$};

    \path [line] (init) -- node[align = center] {$\Xi_n$}(mginit);
    \path [line] (init2) -- node[align = center] {$\Xi_n$} (mginit2);
    \path [line] (init) -- node[align = center] {$\textbf{cdr}_{p}$} (init2);
    \path [line] (mginit) -- node[align = center] {$\textbf{gcdr}(\cdot,\; p)$} (mginit2);
  \end{tikzpicture} 
\caption{}\label{cdrcommutativediagram}
\end{figure}

\section{Oriented Components of graphs and \textbf{gcdr}.}

The following result, Theorem 1 of \cite{TBS}, generalizes Theorem 4 of \cite{HP} from the context of oriented graphs arising from signed permutations, to the general context of oriented graphs. 
\begin{theorem} [Fundamental Theorem of Oriented Graphs]\label{gcdrtheorem}
If each component of the oriented graph $(\mathcal{G},f)$ is oriented,  then there exists an oriented vertex $v$ of $\mathcal{G}$ such that each component of $\textbf{gcdr}((\mathcal{G},f),\; v)$ is oriented.
\end{theorem}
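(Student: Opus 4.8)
The plan is to run an extremal argument in the spirit of the safe-reversal lemma for sorting signed permutations, but carried out directly on arbitrary oriented graphs.

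First I would reduce to a single component. Since $\textbf{gcdr}((\mathcal{G},f),v)$ alters only the closed neighborhood $\textsf{N}(v)$ — toggling edges inside $\textsf{N}(v)$ and flipping $f$ on $\textsf{N}(v)$ — and since every neighbor of $v$ lies in the same component of $\mathcal{G}$ as $v$, every component other than the one containing $v$ is left untouched and therefore stays oriented. Hence it suffices to exhibit an oriented vertex $v$ so that the component $C$ containing it does not break off any unoriented piece; the newly isolated, now unoriented vertex $v$ is not itself a component and may be ignored. If $\mathcal{G}$ has an oriented component, it contains an oriented vertex, so such a $v$ exists to choose among; an oriented isolated vertex is handled trivially.

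For the choice of $v$, write $O=f^{-1}(0)$ and $U=f^{-1}(1)$ for the oriented and unoriented vertices, let $N(v)$ denote the open neighborhood (so $\textsf{N}(v)=N(v)\cup\{v\}$), and set $\phi(v)=|U\cap N(v)|-|O\cap N(v)|$. A direct count shows the number of oriented vertices of $\textbf{gcdr}((\mathcal{G},f),v)$ equals $|O|-1+\phi(v)$: vertex $v$ becomes unoriented, each oriented neighbor of $v$ becomes unoriented, and each unoriented neighbor becomes oriented. I would then choose $v$ to be an oriented vertex maximizing $\phi$, equivalently maximizing the number of oriented vertices produced.

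Now suppose for contradiction that this $v$ creates an unoriented component $B$ with at least two vertices. Splitting $B=B_0\sqcup B_1$ with $B_1=B\cap N(v)$ and $B_0=B\setminus N(v)$, the color bookkeeping forces $B_1\subseteq O$ (oriented vertices that got flipped) and $B_0\subseteq U$ (untouched). A short connectivity argument shows $B_1\neq\emptyset$: if $B$ met $N(v)$ in no vertex, then no edge incident to $B$ would be toggled and no color in $B$ would change, so $B$ would already be a full component strictly inside the connected graph $C$ — impossible. Thus $B$ contains a vertex $w$ that was oriented before the move. The crux, and the step I expect to be the main obstacle, is to contradict maximality by producing such a $w\in B_1$ with $\phi(w)>\phi(v)$. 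The contributions of $v$ and $w$ themselves to $\phi(w)-\phi(v)$ cancel, so the whole difficulty is bookkeeping the symmetric difference $N(w)\triangle N(v)$ against the vertex colors after accounting for the edge toggling inside $\textsf{N}(v)$. Here the facts that $B_1\subseteq O$, that $B_0\subseteq U$, and that $B$ has no edges leaving it in $\textbf{gcdr}((\mathcal{G},f),v)$ tightly constrain the adjacencies among $w$, $v$, $B_0$, and the rest of $C$, and these structural constraints are exactly what should force the remaining sum to be positive — contradicting the choice of $v$ and completing the proof.
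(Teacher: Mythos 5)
A preliminary remark: the paper does not prove this statement at all --- it imports it as Theorem 1 of \cite{TBS}, generalizing Theorem 4 of \cite{HP} --- so there is no in-paper proof to compare against. Your plan is the standard ``maximal score'' argument from Bergeron's elementary presentation \cite{AB}, and the scaffolding is correct: the reduction to the component containing $v$, the count $|O|-1+\phi(v)$ of oriented vertices after the move, the decomposition $B=B_0\sqcup B_1$ with $B_1\subseteq O$ and $B_0\subseteq U$, and the argument that $B_1\neq\emptyset$ all check out. The problem is that the decisive step --- exhibiting $w\in B_1$ with $\phi(w)>\phi(v)$ --- is only announced, not carried out; you explicitly defer it as ``the main obstacle.'' Since that inequality is the entire content of the extremal argument, what you have is a proof plan, not a proof.

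The step is fillable, but it contains a subtlety your sketch does not anticipate. Take any $w\in B_1$. Every neighbour of $w$ in $\textbf{gcdr}((\mathcal{G},f),v)$ lies in the unoriented component $B$, and that post-move neighbourhood is exactly $(N(w)\setminus \textsf{N}(v))\cup(\textsf{N}(v)\setminus \textsf{N}(w))$; since colours outside $\textsf{N}(v)$ are unchanged and colours inside are flipped, this forces $N(w)\setminus \textsf{N}(v)\subseteq U$ and $N(v)\setminus \textsf{N}(w)\subseteq O$. Substituting these containments into $\phi(w)-\phi(v)$ (the mutual contributions of the adjacent oriented vertices $v$ and $w$ cancel, as you note) gives $\phi(w)-\phi(v)=|N(w)\setminus \textsf{N}(v)|+|N(v)\setminus \textsf{N}(w)|\ge 0$ --- and this is \emph{not} automatically strict. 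Equality occurs precisely when $\textsf{N}(w)=\textsf{N}(v)$, and that case must be disposed of separately: there, local complementation on $\textsf{N}(v)$ deletes every edge at $w$ and creates none, so $w$ becomes isolated and cannot lie in a component $B$ with at least two vertices. So strictness does hold for a genuine unoriented component, but only after this degenerate case is noticed and handled; the phrase ``should force the remaining sum to be positive'' papers over both the computation and this exception.
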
 

Let $(\mathcal{G},f)$ be a finite oriented graph, and let $(v_1,\; \cdots,\; v_k)$ be a sequence of vertices of $\mathcal{G}$. Define $(\mathcal{G}_1,\;f_1) = \textbf{gcdr}((\mathcal{G},f),\;v_1)$ and for $i<k$, $(\mathcal{G}_{i+1},\; f_{i+1}) = \textbf{gcdr}((\mathcal{G}_i,\; f_i),\; v_{i+1})$. Following \cite{TBS} we say that $(v_1,\; \cdots,\; v_k)$ is a \emph{sequence of oriented vertices} if $v_1$ is an oriented vertex of $(\mathcal{G},f)$ and for each $i<k$, $v_{i+1}$ is an oriented vertex of $(\mathcal{G}_i,f_i)$. An oriented sequence of vertices is a \emph{maximal sequence} if $(\mathcal{G}_{k},f_{k})$ has no oriented vertices. It is a \emph{total sequence} if $(\mathcal{G}_{k},f_{k})$ has only isolated, unoriented, vertices.

Observe that a total sequence is a maximal sequence, but a maximal sequence need not be a total sequence. Theorem \ref{gcdrtheorem} implies 
\begin{corollary}\label{gcdrtotal}
If each component of the oriented finite graph $(\mathcal{G},f)$ is oriented, then there is a total sequence of oriented vertices in $\mathcal{G}$.
\end{corollary}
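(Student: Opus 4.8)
The plan is to build the total sequence greedily, repeatedly invoking Theorem \ref{gcdrtheorem} so that the invariant ``every component is oriented'' is preserved at each stage, and then to read off totality from this invariant once no oriented vertices remain. Concretely, I would set $(\mathcal{G}_0, f_0) = (\mathcal{G}, f)$ and argue by induction that, given an oriented graph $(\mathcal{G}_i, f_i)$ in which every component is oriented and which still possesses at least one oriented vertex, one can select an oriented vertex $v_{i+1}$ of $(\mathcal{G}_i, f_i)$ so that every component of $(\mathcal{G}_{i+1}, f_{i+1}) = \textbf{gcdr}((\mathcal{G}_i, f_i), v_{i+1})$ is again oriented. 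When $(\mathcal{G}_i, f_i)$ has a component, this selection is exactly what Theorem \ref{gcdrtheorem} supplies; when it has no component but still carries a (necessarily isolated) oriented vertex $v_{i+1}$, I would note that $\textsf{N}(v_{i+1}) = \{v_{i+1}\}$, so $\textbf{gcdr}$ merely recolors $v_{i+1}$ as unoriented and leaves all edges fixed, trivially preserving the invariant.

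The second ingredient is termination. Here I would record the (already observed) fact that applying $\textbf{gcdr}$ to an oriented vertex $p$ renders $p$ unoriented and isolated, and then verify that $p$ stays unoriented and isolated under every subsequent $\textbf{gcdr}$ step: if $q \neq p$ is the vertex chosen at a later stage, then, $p$ being isolated, $p \notin \textsf{N}(q)$, so local complementation along $\textsf{N}(q)$ touches neither the (empty) edge set at $p$ nor the color $f(p)$. Consequently the chosen vertices $v_1, v_2, \ldots$ are pairwise distinct, and since $\mathcal{G}$ is finite the iteration halts after some $k \le \vert V\vert$ steps at an oriented graph $(\mathcal{G}_k, f_k)$ with no oriented vertex.

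Finally I would extract totality from the invariant. Because each $(\mathcal{G}_i, f_i)$ has all components oriented, so does $(\mathcal{G}_k, f_k)$; but a component is oriented only if it contains an oriented vertex, and $(\mathcal{G}_k, f_k)$ has none. Hence $(\mathcal{G}_k, f_k)$ has no components at all, i.e.\ every vertex is isolated, and every such vertex is unoriented since no oriented vertex survives. Thus $(\mathcal{G}_k, f_k)$ consists solely of isolated unoriented vertices, and $(v_1, \ldots, v_k)$ is a total sequence of oriented vertices.

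I expect the only delicate point to be the bookkeeping that separates genuine components (classes of size at least two, which the hypothesis forces to be oriented) from isolated vertices (which may be oriented and must still be consumed to reach a maximal sequence); the substantive work of producing an orientation-preserving vertex is carried entirely by Theorem \ref{gcdrtheorem}, and termination reduces to the elementary ``isolated stays isolated'' observation above.
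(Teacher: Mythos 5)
Your proposal is correct and is exactly the argument the paper intends: the corollary is stated as an immediate consequence of Theorem \ref{gcdrtheorem}, obtained by iterating that theorem greedily while the all-components-oriented invariant persists, and your termination and totality bookkeeping (processed vertices stay isolated and unoriented; an oriented-vertex-free graph with all components oriented has no components) just makes the implicit details explicit. No gaps.
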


\begin{problem}\label{totalsequenceproblem}
Let $(\mathcal{G},f)$ be a connected oriented graph with an oriented vertex. Are all total sequences of oriented vertices for $\mathcal{G}$ of the same length?
\end{problem}

\section{The \cdr Revision Theorem.}  


We now reformulate Theorem 3 from \cite{TBS}, named here the \emph{\cdr Revision Theorem}, for our purposes. Throughout this section we assume that $(\mathcal{G}, f)$ is an oriented graph. We also allow for the presence of a number of isolated vertices. The vertex set of $\mathcal{G}$ is $V$, while the edge set is $E$. 


\begin{theorem} [\cdr Revision Theorem, \cite{TBS}]\label{tbstheorem}
Let $(\mathcal{G},f)$ be a graph which has a unique component. Assume that this component is oriented. If $(v_1,\; v_2,\, \cdots,\; v_{m-1},\;v_m)$ is a maximal, but not total, sequence of oriented vertices for the graph $(\mathcal{G},f)$, then there exists an $\ell<m$ and a sequence $(r_1,\;r_2)$ of vertices of $\mathcal{G}$ such that:
\begin{enumerate}
\item{$(v_1,\;\cdots,\;v_{\ell},\;r_1,\;r_2,\; v_{\ell+1},\;\cdots,\; v_m)$ is a maximal oriented sequence in $(\mathcal{G},f)$, and }
\item{for each $j$ such that $\ell< j\le m$ neither $r_1$, nor $r_2$ is an oriented vertex in any of the graphs $\mathbf{gcdr}(\mathbf{gcdr}(\cdots\mathbf{gcdr}((\mathcal{G},f),v_1),\cdots v_{j-1}) ,v_{j})$.
}
\end{enumerate}
\end{theorem}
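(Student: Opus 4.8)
The plan is to locate the first step at which the sequence abandons the ``all components oriented'' property, and to repair the run at that point by inserting a pair of operations drawn from the unoriented component that gets created. Write $(\mathcal{G}_0,f_0)=(\mathcal{G},f)$ and, for $1\le i\le m$, set $(\mathcal{G}_i,f_i)=\textbf{gcdr}((\mathcal{G}_{i-1},f_{i-1}),v_i)$. Let $\ell$ be the largest index for which every component of $(\mathcal{G}_\ell,f_\ell)$ is oriented. The single component of $(\mathcal{G},f)$ is oriented, so $\ell\ge 0$; and since $(v_1,\dots,v_m)$ is maximal but not total, $(\mathcal{G}_m,f_m)$ has a component with no oriented vertex, so $\ell<m$. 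By maximality of $\ell$, applying $v_{\ell+1}$ to $(\mathcal{G}_\ell,f_\ell)$ creates an unoriented component $U$ in $(\mathcal{G}_{\ell+1},f_{\ell+1})$.

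First I would record the permanence of $U$, which already secures conclusion (2). A single $\textbf{gcdr}$ is a local complementation on a closed neighborhood, and that neighborhood lies inside one component; hence $\textbf{gcdr}$ can split a component but never merges two components, and it leaves all other components untouched. Since $U$ has no oriented vertex, each later operation $v_{\ell+2},\dots,v_m$ is applied at a vertex outside $U$ and does not disturb it. Therefore $U$ appears unchanged, with all its vertices unoriented, in every graph $(\mathcal{G}_j,f_j)$ with $\ell<j\le m$; any two vertices $r_1,r_2$ of $U$ are then unoriented in each such graph, which is exactly conclusion (2).

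Next I would construct $r_1,r_2$ inside $U$. In $(\mathcal{G}_\ell,f_\ell)$ the vertices that later constitute $U$ all lie in the oriented component $C$ containing $v_{\ell+1}$, and the ones that $v_{\ell+1}$ turns from oriented to unoriented are precisely those of $U$ in the neighborhood of $v_{\ell+1}$. I would take $r_1$ to be such a vertex (oriented in $(\mathcal{G}_\ell,f_\ell)$, hence a legal first insertion) and then, using the orientation flips produced by local complementation together with the Fundamental Theorem of Oriented Graphs (Theorem \ref{gcdrtheorem}) applied inside $C$, choose $r_2$ oriented in $\textbf{gcdr}((\mathcal{G}_\ell,f_\ell),r_1)$ and again lying in $U$. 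Establishing that two such vertices exist---equivalently, that a freshly pinched-off unoriented component must carry enough vertices flipped by $v_{\ell+1}$ to supply a legal two-step oriented prefix---is the first technical point to nail down.

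Finally I would verify conclusion (1). Because $r_1,r_2\in C$ and $\textbf{gcdr}$ localizes to components, inserting them alters no component of $(\mathcal{G}_\ell,f_\ell)$ other than $C$, so the entire verification is confined to $C$ and its descendants. There I must show that after $r_1,r_2$ the vertex $v_{\ell+1}$ is still oriented, that the tail $v_{\ell+1},\dots,v_m$ remains a legal oriented sequence, and that the orienting work done by $r_1,r_2$---which in the old run was discarded when $U$ pinched off---now causes the tail to consume the $U$-region as well, so that the descendants of $C$ end with no oriented vertex and the revised sequence is maximal. This confluence between the inserted pair and the tail, namely that the two local complementations at $r_1,r_2$ commute appropriately with the local complementation at $v_{\ell+1}$ on their overlapping neighborhoods, is the main obstacle; it is exactly the bookkeeping of Theorem 3 of \cite{TBS}. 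Since the present statement is a reindexed reformulation of that theorem, I would either reproduce that commutation analysis or, more economically, derive (1) and (2) by matching our definitions of maximal and total oriented sequence to the hypothesis and conclusion of the cited result.
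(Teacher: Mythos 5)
The paper does not actually prove this statement: it is presented as a reformulation of Theorem~3 of \cite{TBS} and is imported wholesale from that reference, so the only ``proof'' the paper supplies is the dictionary between the two formulations --- which is exactly the fallback you offer in your final sentence. Judged against that, your proposal is an honest reconstruction of how the cited proof is organized: locate the first index $\ell$ at which an unoriented component $U$ is pinched off, insert a repairing pair drawn from $U$, and note that every later $\mathbf{gcdr}$ is a local complementation on the closed neighbourhood of an oriented vertex and therefore never touches $U$ again. That permanence observation does deliver conclusion~(2) once suitable $r_1,r_2\in U$ exist, and it is correctly argued.

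The gaps are the two points you yourself flag, and they are not minor: they are the substance of the theorem. First, you have not shown that $U$ actually supplies a legal pair --- a vertex $r_1\in U\cap N(v_{\ell+1})$ oriented in $(\mathcal{G}_\ell,f_\ell)$, and a second vertex $r_2\in U$ oriented in $\mathbf{gcdr}((\mathcal{G}_\ell,f_\ell),r_1)$ --- and Theorem~\ref{gcdrtheorem} only guarantees a good oriented vertex \emph{somewhere} in a graph with all components oriented, not one lying inside the prescribed set $U$; it is not even immediate that $U\cap N(v_{\ell+1})\neq\emptyset$, since $U$ could a priori consist of vertices that were already unoriented and merely became disconnected from every oriented vertex. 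Second, conclusion~(1) requires that after inserting $r_1,r_2$ the vertex $v_{\ell+1}$ is still oriented, that the whole tail $v_{\ell+1},\dots,v_m$ remains a legal oriented sequence, and that the extended sequence is again maximal; your appeal to an unproved ``commutation'' of the local complementations at $r_1,r_2$ with the one at $v_{\ell+1}$ is precisely where the bookkeeping of \cite{TBS} lives. As a standalone argument the proposal therefore does not close; as a citation-plus-translation it coincides with what the paper does, and that is the economical route to take here.
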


Repeated applications of the \cdr Revision Theorem then leads to 
\begin{corollary}\label{tbscorollary}
Let $(\mathcal{G},f)$ be a graph that has a unique component. If this component is oriented and $(v_1,\; v_2,\, \cdots,\; v_{m-1},\;v_m)$ is a maximal, but not total, sequence of oriented vertices for $(\mathcal{G},f)$, then there exists an $\ell<m$ and a sequence $(r_1,\;r_2,\; \cdots,\; r_{2k-1},\;r_{2k})$ of vertices of $\mathcal{G}$ such that $(v_1,\;\cdots,\;v_{\ell},\;r_1,\;r_2,\;\cdots,\;r_{2k-1},\;r_{2k},\; v_{\ell+1},\;\cdots,\; v_m)$ is a total oriented sequence in $(\mathcal{G},f)$.
\end{corollary}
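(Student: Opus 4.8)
The plan is to prove the corollary by iterating the \cdr Revision Theorem (Theorem \ref{tbstheorem}), using a length bound to force the iteration to terminate at a total sequence. Given the maximal, non-total sequence $(v_1,\dots,v_m)$, Theorem \ref{tbstheorem} produces an index $\ell<m$ and a pair $(r_1,r_2)$ for which $\sigma_1=(v_1,\dots,v_\ell,r_1,r_2,v_{\ell+1},\dots,v_m)$ is again a maximal oriented sequence. If $\sigma_1$ is total I stop; otherwise $\sigma_1$ is maximal but not total, and since the underlying graph $(\mathcal G,f)$ is unchanged and still has a single oriented component, Theorem \ref{tbstheorem} applies to $\sigma_1$ and yields a further rescue pair. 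I would iterate this.

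For termination I would use the following monovariant. Because \textbf{gcdr} applied to an oriented vertex $p$ leaves $p$ isolated and unoriented, and because an isolated vertex is never a member of $\textsf{N}(q)$ for any subsequently processed oriented vertex $q$, once a vertex has been processed it stays isolated and unoriented forever and can never be processed again. Hence every oriented sequence for $(\mathcal G,f)$ has length at most $|V|$. Each application of Theorem \ref{tbstheorem} increases the length by exactly $2$, so the iteration can proceed only finitely often; and since Theorem \ref{tbstheorem} applies to any maximal sequence that is not yet total, the iteration must terminate at a total sequence. This yields a total sequence obtained from $(v_1,\dots,v_m)$ by inserting an even number $2k$ of rescue vertices.

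The step I expect to be the main obstacle is showing that these successively inserted pairs can be organized into a single contiguous block $(r_1,\dots,r_{2k})$ placed at one common index $\ell$, with the tail $v_{\ell+1},\dots,v_m$ left intact, rather than as several blocks scattered among the $v_i$. The tool for this is clause (2) of Theorem \ref{tbstheorem}, which certifies that a rescue pair remains unoriented throughout the processing of the retained tail and therefore does not disturb which vertices of that tail are oriented. I would use this non-interference to slide every later rescue pair back to the insertion point of the first, and I would organize the argument as an induction on the number of non-isolated vertices of the terminal graph, applying the inductive hypothesis to the graph reached after $v_1,\dots,v_\ell$. Verifying that the relevant part of that intermediate graph again presents a single oriented component, so that Theorem \ref{tbstheorem} legitimately applies to the tail and the rescue blocks truly abut, is the delicate point I expect to require the most care.
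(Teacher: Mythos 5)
Your proposal takes essentially the same route as the paper: the paper's entire argument for this corollary is the single sentence that ``repeated applications of the \cdr Revision Theorem'' yield it, and your iteration scheme, together with the termination bound coming from the observation that a vertex processed by \textbf{gcdr} becomes and remains isolated and unoriented (so oriented sequences have length at most $|V|$), is a correct and more careful rendering of exactly that. The contiguity issue you flag, namely whether the successive rescue pairs can all be gathered into one block at a single index $\ell$ rather than scattered among the $v_i$, is a genuine gap in the corollary as literally stated, but the paper does not address it at all; note that in the corollary's only application, the proof of the \cdr Parity Theorem, all that is used is that exactly $2k$ vertices are inserted (hence the parity of the total length is preserved), so the single-block form is inessential there.
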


\begin{example}\label{ex:maxseq}
Consider the signed permutation
$\pi = \lbrack 1,\, 3,\, 5,\, -2,\, -6,\, 4 \rbrack$. 
\begin{figure}[h]
\begin{subfigure}[b]{0.4\textwidth}
\centering
\begin{tikzpicture}[scale = 2.5]
\coordinate (-v4) at (-3.5,0); 
\coordinate (-v3) at (-3.0,0); 
\coordinate (-v2) at (-2.5,0); 
\coordinate (-v1) at (-2.0,0); 
\coordinate (v0) at (-1.5,0);   
\coordinate (v1) at (-1.0,0);   

\coordinate (h1) at (-3.4,-0.05);  
\coordinate (t3) at (-3.1,-0.05);   
\coordinate (h3) at (-2.9,-0.05);  
\coordinate (t5) at (-2.6,-0.05);   
\coordinate (h5) at (-2.4,-0.05);  
\coordinate (t2) at (-1.9,-0.05);    
\coordinate (h2) at (-2.1,-0.05);   
\coordinate (t6) at (-1.4,-0.05);  
\coordinate (t4) at (-1.1,-0.05);    
\coordinate (h4) at (-0.9,-0.05);   

\node at (-v4) [below] {1};
\node at (-v3) [below] {3};
\node at (-v2) [below] {5};
\node at (-v1) [below] {-2};
\node at (v0) [below] {-6};
\node at (v1) [below] {4};

\node at (h1) [above] {h};
\node at (h2) [above] {h};
\node at (t2) [above] {t};
\node at (h3) [above] {h};
\node at (t3) [above] {t}; 
\node at (h4) [above] {h}; 
\node at (t4) [above] {t};
\node at (t5) [above] {t};
\node at (h5) [above] {h};
\node at (t6) [above] {t};

\draw [thick,  color=dkgrn] (h2)+(-0.05,.15) arc (60:120:0.95cm); 
\draw [thick,  color=blue] (t2)+(-0.05,.15) arc (60:120:1.45cm); 
\draw [thick,  color=red] (h4)+(-0.05,.15) arc (60:120: 1.65cm); 
\draw [thick,  color=orange] (t4)+(-0.05,.15) arc (60:120:1.75cm); 
\draw [thick,  color=purple] (t6)+(-0.05,.15) arc (60:120:0.95cm); 
\end{tikzpicture}
\caption{Arcs connect like pointers of $\pi$. }\label{fig:OverlapGraphpi1}
\end{subfigure}
\begin{subfigure}[b]{0.4\textwidth}
\centering
\begin{tikzpicture}[scale = .10]
\coordinate (green) at (-4,9); 
\coordinate (red) at (4,9); 
\coordinate (orange) at (10,2); 
\coordinate (blue) at (0,-6); 
\coordinate (purple) at (-10,2); 

\node at (green) [above] {(1,2)};
\node at (red) [right] {(2,3)};
\node at (orange) [right] {(3,4)};
\node at (blue) [right] {(4,5)};
\node at (purple) [below] {(5,6)};

\draw [thick, fill = black] (green) circle (20 pt);
\draw [thick, fill = black] (red) circle (20 pt);
\draw [thick] (orange) circle (20 pt);
\draw [thick] (blue) circle (20 pt);
\draw [thick, fill = black] (purple) circle (20 pt);

\draw [thick,dotted] (green) -- (orange);%
\draw [thick,dotted] (green) -- (blue);    %
\draw [thick,dotted] (green) -- (purple);%
\draw [thick,dotted] (red) -- (orange);   %
\draw [thick,dotted] (red) -- (blue);    %
\draw [thick,dotted] (red) -- (purple);    %
\draw [thick,dotted] (orange) -- (blue); %

\end{tikzpicture}
\caption{The overlap graph of  $\pi$.} \label{fig:maxseqgraph}
\end{subfigure}
\caption{Constructing the overlap graph of $\pi$}\label{fig:OverlapGraphpi2}
\end{figure}
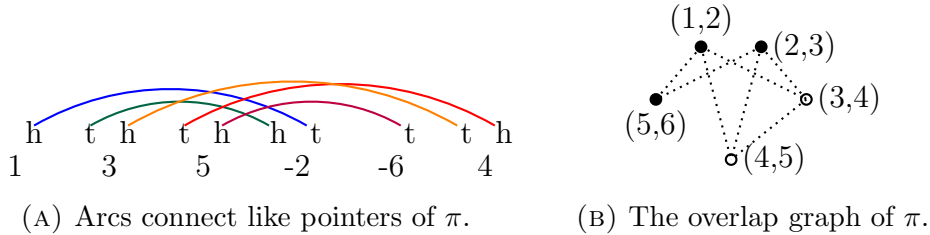
The oriented sequence $((5,\, 6))$ is maximal. Each two-term sequence in the first column of Figure \ref{fig:orientedtomaximal} is an oriented sequence in the overlap graph of $\pi$, while the corresponding three term sequence in the second column is maximal.  \begin{figure}[ht]
\begin{center}
\begin{tabular}{|l|cl|} \hline
$((1,2),\, (2,3))$ & & $((1,2),\, (2,3),\, (5,6))$\\ 
$((1,2),\, (3,4))$ & & $((1,2),\, (3,4),\, (5,6))$\\ 
$((1,2),\, (4,5))$ & & $((1,2),\, (4,5),\, (5,6))$\\ 
$((2,3),\, (1,2))$ & & $((2,3),\, (1,2),\, (5,6))$\\ 
$((2,3),\, (3,4))$ & & $((2,3),\, (3,4),\, (5,6))$\\ 
$((2,3),\, (4,5))$ & & $((2,3),\, (4,5),\, (5,6))$\\ \hline
\end{tabular}
\end{center}
\caption{Oriented, and corresponding maximal sequences for $\pi$.}\label{fig:orientedtomaximal}
\end{figure}
By Theorem \ref{tbstheorem}, for each of these maximal three-term sequences there is a pair of vertices of the overlap graph that can be inserted right before the term $(5,6)$ to produce a five-term maximal oriented sequence. One example of such a five-term maximal sequence, $((1,2),\, (3,4),\, (4,5),\, (2,3),\, (5,6))$, is a total sequence for the overlap graph of $\pi$. By Theorem~\ref{cdrfundthm} all total sequences for the overlap graph of $\pi$ are of length 5.
\end{example}

\section{The \cds rescue theorem.}

As can be gleaned from Example \ref{ex:maxseq}, even when a signed permutation is \cdr-sortable, not all applications of \cdr-sorting operations successfully sort the permutation.
A failed sorting during micronuclear decryption in a ciliate could have devastating consequences for the organism. In this section we prove that the hypothesized presence of also the \cds sorting operation is highly advantageous for accomplishing successful sorting of signed permutations in the organism.

For the reader's convenience we review information regarding \cds. 
Suppose $\alpha$ has a pair of pointers $p = (x,\; x+1)$, $q = (y,\; y+1)$. Then $\beta= {\textbf{cds}}_{p,\; q}(\alpha)$,  the result of the sorting operation is as follows:
{\flushleft{\underline{Case 1:}}} If the pointers do not appear in the order $p \ldots q \ldots p \ldots q$, or  in the order $q \ldots p \ldots q \ldots p$ we define
$\beta = {\text{\cds}}_{p,\; q}(\alpha) = \alpha$.
{\flushleft{\underline{Case 2:}}} If
$\alpha = \lbrack \cdots,\; a,\; {\mathbf{_px+1,\; b,\;\dots,c,\;\; y_q}},\;d,\;\cdots,\; e,\; x_p,\; {\mathbf{f,\; \dots,\; g}},\;_qy+1,\;h,\; \cdots \rbrack$, define
\[
  \beta = {\text{\cds}}_{p,\; q}(\alpha) = \lbrack \cdots,\; a,\;{\mathbf{f,\; \dots,\; g}},\;d,\;\cdots,\; e,\; x_p,\; {\mathbf{_px+1,\; b,\;\dots,c,\;\; y_q}} ,\;_qy+1,\;h,\;\cdots \rbrack
\]
{\flushleft{\underline{Case 3:}}} If
$\alpha = \lbrack \cdots,\; a,\;x_p,\; {\mathbf{ b,\;\dots,c,\;\; y_q}},\;d,\;\cdots,\;e,\;  {\mathbf{ _px+1,\; f,\; \dots,\; g}},\;_qy+1,\;h,\;\cdots \rbrack$, define
\[
  \beta = {\text{\cds}}_{p,\; q}(\alpha) = \lbrack \cdots,\; a,\; x_p,\; {\mathbf{_px+1,\; f,\; \dots,\; g}},\;d,\;\cdots,\;e,\;  {\mathbf{ b,\;\dots,c,\;\; y_q}} ,\;_qy+1,\;h,\;\cdots \rbrack
\]
{\flushleft{\underline{Case 4:}}} If
$\alpha = \lbrack \cdots,\; a,\;x_p,\; {\mathbf{ b,\;\dots,c}},\; _qy+1,\;d,\;\cdots,\; e,\; {\mathbf{ _px+1,\; f,\; \dots,\; g,\;y_q}},\;h,\;\cdots \rbrack$, define
\[
  \beta = {\text{\cds}}_{p,\; q}(\alpha) = \lbrack \cdots,\; a,\; x_p,\; {\mathbf{_px+1,\; f,\; \dots,\; g,\; y_q}},\;_qy+1,\;d,\;\cdots,\; e,\; {\mathbf{ b,\;\dots,c}},\;h,\;\cdots \rbrack
\]
{\flushleft{\underline{Case 5:}}} If
$\alpha = \lbrack \cdots,\; a,\; {\mathbf{_px+1,\; b,\;\dots,c}},\; _qy+1,\;d,\;\cdots,\;e,\;x_p,\; {\mathbf{ f,\; \dots,\; g,\;y_q}},\;h,\;\cdots \rbrack$, define
\[
  \beta = {\text{\cds}}_{p,\; q}(\alpha) = \lbrack \cdots,\; a,\; {\mathbf{ f,\; \dots,\; g,\; y_q}},\;_qy+1,\;d,\;\cdots,\;e,\;x_p,\; {\mathbf{ _px+1,\; b,\;\dots,c}},\;h,\;\cdots \rbrack
\]
\vspace{0.05in}

\begin{example}\label{ex:cds} We perform ${\text{\cds}}_{(3,4),(6,7)}$ on $\alpha = [3,6,5,2,4,8,1,7]$: 
Label $\alpha$ with the relevant pointers:
$\alpha = [3_{(3,4)},{\mathbf{6}_{(6,7)}},5,2,{}_{(3,4)}{\mathbf{4,8,1}},{}_{(6,7)}7]$.
 Performing the swap yields $\beta = [3,{\mathbf{4,8,1}},5,2,{\mathbf{6}},7]$. 
Both $3,4$ and $6,7$ are now adjacent in $\beta$.
\end{example}

\begin{lemma}[The \textsf{cdr}-\textsf{cds} Lemma]\label{cdsrescuelemma}
Let a, b, and c be pointers in a signed permutation $\pi$.  Assume that $\beta = \text{\cdr}_c\circ\text{\cdr}_b\circ\text{\cdr}_a(\pi)$. Also assume that neither $\text{\cdr}_a$, nor $\text{\cdr}_b$ applies to $\gamma = \text{\cdr}_c(\pi)$. Then $\text{\cds}_{(a,b)}$ applies to $\gamma$ and $\beta = \text{\cds}_{(a,b)}(\gamma)$.
\end{lemma}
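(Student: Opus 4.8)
The plan is to prove the two assertions of the lemma separately: that $\text{\cds}_{(a,b)}$ is \emph{applicable} to $\gamma$, and that it \emph{returns} $\beta$. The first assertion I would settle entirely on the oriented overlap graph $\mathcal{O}(\pi)$, and only the second would be reduced to an explicit computation with the defining cases of \cdr and \cds. Throughout, I read ``$\text{\cdr}_p$ applies'' as ``$p$ is an oriented vertex'' (Cases 2--3 of the definition), and I use Theorem \ref{cdrgcdr} to replace each \cdr by the graph operation $\textbf{gcdr}(\cdot,p)=\textsf{lc}(\cdot,\textsf{N}(p))$, which flips the orientation of exactly the vertices of the closed neighborhood $\textsf{N}(p)$ and toggles exactly the edges having both endpoints in $\textsf{N}(p)$. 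I treat the pointers $a,b,c$ as distinct.

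For applicability I would extract adjacency data from the hypotheses. Since $\text{\cdr}_a$ applies to $\pi$, the vertex $a$ is oriented in $\mathcal{O}(\pi)$; since $\text{\cdr}_a$ does not apply to $\gamma=\text{\cdr}_c(\pi)$, the vertex $a$ is unoriented in $\textbf{gcdr}(\mathcal{O}(\pi),c)$, so its orientation flipped and hence (as $a\neq c$) $a$ is a neighbor of $c$ in $\mathcal{O}(\pi)$. For $b$ I would split on whether $b$ is oriented in $\pi$. If it is, then because $b$ remains oriented after $\text{\cdr}_a$ but loses orientation after $\text{\cdr}_c$, one gets $b\notin\textsf{N}(a)$ and $b\in\textsf{N}(c)$. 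If it is not, then because $b$ becomes oriented after $\text{\cdr}_a$ and stays unoriented after $\text{\cdr}_c$, one gets $b\in\textsf{N}(a)$ and $b\notin\textsf{N}(c)$.

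The punchline of this part is that $\{a,b\}$ is an edge of $\textbf{gcdr}(\mathcal{O}(\pi),c)$ in both cases: the $c$-local complement toggles $\{a,b\}$ exactly when $a,b\in\textsf{N}(c)$, so in the first case it turns the non-edge $\{a,b\}$ into an edge, while in the second case it leaves the edge $\{a,b\}$ untouched. Thus in $\gamma$ the pointers $a$ and $b$ overlap and are both unoriented; equivalently their four endpoints interleave in the pattern $a\ldots b\ldots a\ldots b$ (or its reverse) between entries of matching signs. This is precisely the hypothesis of one of Cases 2--5 of the \cds definition, so $\text{\cds}_{(a,b)}$ is applicable to $\gamma$.

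For the equality $\beta=\text{\cds}_{(a,b)}(\gamma)$ I would return to permutations. Using the overlaps found above I would write $\pi$ in a single positional template that marks both occurrences of each of $a$, $b$, $c$ and the generic segments between them, using the head/tail and left/right symmetries of the pointer notation to keep the number of templates small. I would then evaluate both sides of the claimed identity on the template: on the left, apply $\text{\cdr}_a$, then $\text{\cdr}_b$, then $\text{\cdr}_c$, recording how each reversal relocates the segment boundaries and negates its block; on the right, apply $\text{\cdr}_c$ and then the matching \cds case to $\gamma$; finally compare the two segment arrangements entry by entry. I expect the sign bookkeeping to be the main obstacle: two successive \cdr steps negate the symmetric difference of their reversed blocks whereas \cds negates nothing, so the identity can hold only because the single reversal $\text{\cdr}_c$ appears on both sides but in different positions of the composition, and verifying that this exactly cancels the spurious sign changes requires tracking the image of every segment boundary through three non-commuting reversals. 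A secondary difficulty is to organize the verification so that the two orientation cases for $b$, together with the mirror symmetries, are dispatched uniformly rather than by brute enumeration.
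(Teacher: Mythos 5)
Your argument for \emph{applicability} is correct and takes a genuinely different route from the paper. The paper works entirely on the permutation: from the sign hypotheses it deduces that exactly one occurrence of $a$ lies between the two occurrences of $c$, then splits into three positional cases for $b$ and reads off the interleaving of $a$ and $b$ in $\gamma$ case by case. You instead translate everything to the overlap graph via Theorem \ref{cdrgcdr}: the hypotheses force $a\in \textsf{N}(c)$, and the two orientation cases for $b$ give either ($b\notin \textsf{N}(a)$, $b\in \textsf{N}(c)$) or ($b\in \textsf{N}(a)$, $b\notin \textsf{N}(c)$), so that local complementation at $\textsf{N}(c)$ leaves $\{a,b\}$ an edge of the overlap graph of $\gamma$ in both cases, with both vertices unoriented. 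This is cleaner and more uniform than the paper's positional reasoning, and it buys you a conceptual explanation of \emph{why} $a$ and $b$ must interleave in $\gamma$; the paper's version buys the explicit positional data (which occurrence sits where) that is then reused immediately in the second half of the proof.

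For the \emph{equality} $\beta=\text{\cds}_{(a,b)}(\gamma)$ your plan is the same as the paper's --- partition $\pi$ into the segments cut out by the six pointer occurrences and track each segment and its sign through both compositions --- but you have only described the computation, not performed it. This is the actual content of the lemma, and it is where an error could hide: the paper writes $\pi=\pi_1\pi_2\pi_3\pi_4\pi_5\pi_6\pi_7$ and verifies in its Case 1 that the left side yields $\pi_1\,(-\pi_5)\,\pi_3\,(-\pi_6)\,\pi_2\,(-\pi_4)\,\pi_7$ and that $\text{\cds}_{(a,b)}$ applied to $\gamma=\pi_1\pi_2\pi_3(-\pi_6)(-\pi_5)(-\pi_4)\pi_7$ swaps the blocks containing $\pi_2$ and $-\pi_5$ to give the same word. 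You should carry out at least one such case explicitly (and say which of the \cds Cases 2--5 is the one that fires); note also that your graph-theoretic Part 1 discards the positional information about which occurrence of $b$ lies inside which interval, so you will have to recover a positional case division anyway before the segment bookkeeping can start. Finally, make explicit the standing assumption that $a$, $b$, $c$ are pairwise distinct and that $a$ and $b$ are both oriented at the moment each is used (i.e.\ the composition $\text{\cdr}_c\circ\text{\cdr}_b\circ\text{\cdr}_a$ is nontrivial at every step); your orientation-flipping argument silently uses both.
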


\begin{proof}  
By hypothesis neither $\text{\cdr}_a$ nor $\text{\cdr}_b$ applies to $\gamma = \text{\cdr}_c(\pi)$. Thus in $\gamma$ all occurrences of the pointer $a$ are of the same sign, and similarly for the pointer $b$. As $\text{\cdr}_a$ applies to the signed permutation $\pi$, the two occurrences of pointer $a$ in $\pi$ have opposite signs. Thus in $\pi$ one of these occurences, and not both, of $a$ is flanked by  pointer $c$, as depicted in Figure \ref{fig:cstraddlesa}.

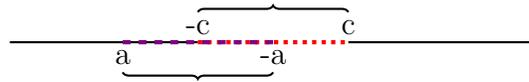
\begin{figure}[h]
  \centering
\begin{tikzpicture}[scale=.5]
\coordinate (c) at (-4,0);
\coordinate (k) at (1,0);
\node at (k)  [yshift=0.2cm]{-c};
\coordinate (k+) at (5,0);
\node at (k+)  [yshift=0.2cm]{c}; 
\coordinate (xc) at (10,0);

\coordinate (i) at (-1,0); 
\node at (i)  [yshift=-0.2cm]{a}; 
\coordinate (i+) at (3,0); 
\node at (i+)  [yshift=-0.2cm]{-a};

only<2->{\node[ inner sep=2pt] at (k){};}
\node[inner sep=2pt] at (k+){};

only<2->{\draw [black, thick] (c) -- (k);}
only<2->{\draw [red, ultra thick, dotted] (k) -- (k+);}
only<2->{\draw [black, thick] (k+) -- (xc);}

only<2->{\draw [violet, ultra thick, dashed] (i) -- (i+);}

\draw [
    thick,
    decoration={
        brace, 
        raise=0.40cm
    },
    decorate
] (k.east) -- (k+) ;

\draw [
    thick,
    decoration={
        brace,
        mirror,
        raise=0.40cm
    },
    decorate
] (i.east) -- (i+) ;
\end{tikzpicture}  
\caption{Pointers $a$ and $c$ in signed permutation $\pi$.}\label{fig:cstraddlesa}
\end{figure}

Figure \ref{fig:cstraddlesa} depicts one of eight configurations regarding placement of and signs on $a$ and $c$. We shall give the argument for this depiction, leaving the similar arguments for the other cases to the reader. Now consider the possible placements of the pointer $b$. There are three cases, depicted in Figure \ref{fig:cdsrescuecases}. We explain the first case, leaving the other two to the reader.

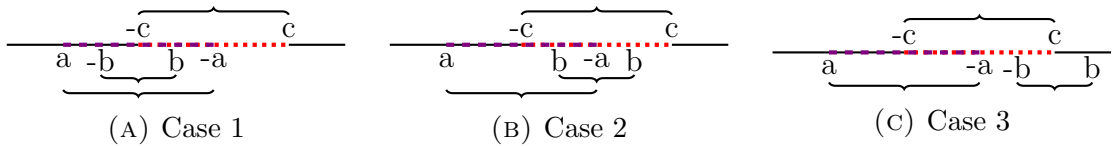
\begin{figure}[h]
  \begin{subfigure}{.3\textwidth} 
  \centering
\begin{tikzpicture}[scale=.5]
\coordinate (c) at (-2.5,0);
\coordinate (k) at (1,0);
\node at (k)  [yshift=0.2cm]{-c}; 
\coordinate (k+) at (5,0);
\node at (k+)  [yshift=0.2cm]{c}; 
\coordinate (xc) at (6.5,0);

only<2->{\node[ inner sep=2pt] at (k){};}
\node[inner sep=2pt] at (k+){};

\coordinate (i) at (-1,0); 
\node at (i)  [yshift=-0.2cm]{a}; 
\coordinate (i+) at (3,0); 
\node at (i+)  [yshift=-0.2cm]{-a}; 

\coordinate (j) at (0,0); 
\node at (j)  [yshift=-0.2cm]{-b}; 
\coordinate (j+) at (2,0); 
\node at (j+)  [yshift=-0.2cm]{b}; 
only<2->{\draw [black, thick] (c) -- (k);}
only<2->{\draw [red, ultra thick, dotted] (k) -- (k+);}
only<2->{\draw [black, thick] (k+) -- (xc);}

only<2->{\draw [violet, ultra thick, dashed] (i) -- (i+);}

\draw [
    thick,
    decoration={
        brace, 
        raise=0.40cm
    },
    decorate
] (k.east) -- (k+) ;

\draw [
    thick,
    decoration={
        brace,
        mirror,
        raise=0.60cm
    },
    decorate
] (i.east) -- (i+) ;

\draw [
    thick,
    decoration={
        brace,
        mirror,
        raise=0.40cm
    },
    decorate
] (j.east) -- (j+) ;
\end{tikzpicture}  
\caption{Case 1}
  \end{subfigure}
  \begin{subfigure}{.3\textwidth} 
  \centering
\begin{tikzpicture}[scale=.5]
\coordinate (c) at (-2.5,0);
\coordinate (k) at (1,0);
\node at (k)  [yshift=0.2cm]{-c}; 
\coordinate (k+) at (5,0);
\node at (k+)  [yshift=0.2cm]{c}; 
\coordinate (xc) at (6.5,0);

only<2->{\node[ inner sep=2pt] at (k){};}
\node[inner sep=2pt] at (k+){};

\coordinate (i) at (-1,0); 
\node at (i)  [yshift=-0.2cm]{a}; 
\coordinate (i+) at (3,0); 
\node at (i+)  [yshift=-0.2cm]{-a}; 

\coordinate (j) at (2,0); 
\node at (j)  [yshift=-0.2cm]{b}; 
\coordinate (j+) at (4,0); 
\node at (j+)  [yshift=-0.2cm]{b}; 
only<2->{\draw [black, thick] (c) -- (k);}
only<2->{\draw [red, ultra thick, dotted] (k) -- (k+);}
only<2->{\draw [black, thick] (k+) -- (xc);}

only<2->{\draw [violet, ultra thick, dashed] (i) -- (i+);}

\draw [
    thick,
    decoration={
        brace, 
        raise=0.40cm
    },
    decorate
] (k.east) -- (k+) ;

\draw [
    thick,
    decoration={
        brace,
        mirror,
        raise=0.60cm
    },
    decorate
] (i.east) -- (i+) ;

\draw [
    thick,
    decoration={
        brace,
        mirror,
        raise=0.40cm
    },
    decorate
] (j.east) -- (j+) ;
\end{tikzpicture}  
\caption{Case 2}
  \end{subfigure}
  \begin{subfigure}{.3\textwidth} 
  \centering
\begin{tikzpicture}[scale=.5]
\coordinate (c) at (-2.5,0);
\coordinate (k) at (1,0);
\node at (k)  [yshift=0.2cm]{-c}; 
\coordinate (k+) at (5,0);
\node at (k+)  [yshift=0.2cm]{c}; 
\coordinate (xc) at (6.5,0);

only<2->{\node[ inner sep=2pt] at (k){};}
\node[inner sep=2pt] at (k+){};

\coordinate (i) at (-1,0); 
\node at (i)  [yshift=-0.2cm]{a}; 
\coordinate (i+) at (3,0); 
\node at (i+)  [yshift=-0.2cm]{-a}; 

\coordinate (j) at (4,0); 
\node at (j)  [yshift=-0.2cm]{-b}; 
\coordinate (j+) at (6,0); 
\node at (j+)  [yshift=-0.2cm]{b}; 
only<2->{\draw [black, thick] (c) -- (k);}
only<2->{\draw [red, ultra thick, dotted] (k) -- (k+);}
only<2->{\draw [black, thick] (k+) -- (xc);}

only<2->{\draw [violet, ultra thick, dashed] (i) -- (i+);}

\draw [
    thick,
    decoration={
        brace, 
        raise=0.40cm
    },
    decorate
] (k.east) -- (k+) ;

\draw [
    thick,
    decoration={
        brace,
        mirror,
        raise=0.40cm
    },
    decorate
] (i.east) -- (i+) ;

\draw [
    thick,
    decoration={
        brace,
        mirror,
        raise=0.40cm
    },
    decorate
] (j.east) -- (j+) ;
\end{tikzpicture}  
\caption{Case 3}
  \end{subfigure}

\caption{Three general cases for placement of pointers $a$, $b$ and $c$ in $\pi$.}\label{fig:cdsrescuecases}
\end{figure}

{\flushleft{\bf Case 1 of Figure \ref{fig:cdsrescuecases}:} }
We write $\pi$ as the concatenation
\[
  \pi = \pi_1\,\pi_2\, \pi_3\, \pi_4\, \pi_5\, \pi_6\, \pi_7
\]
where: $\pi_1$ is the segment up to and including pointer $a$, $\pi_2$ is the segment from pointer $a$ up to and including pointer $-b$, $\pi_3$ is the segment from pointer $-b$ up to and including pointer $-c$, $\pi_4$ is the segment from pointer $-c$ up to and including pointer $b$, $\pi_5$ is the segment from pointer $b$ up to and including pointer $-a$, $\pi_6$ is the segment from pointer $-a$ up to and including pointer $c$, and $\pi_7$ is the final segment.

Applying $\text{\cdr}_a$ produces
$\pi_1\, -\pi_5\, -\pi_4\, -\pi_3\, -\pi_2\, \pi_6\, \pi_7$.
Then applying $\text{\cdr}_b$ produces $\pi_1\, -\pi_5\, \pi_3\, \pi_4\, -\pi_2\, \pi_6\, \pi_7$.
Finally applying $\text{\cdr}_c$ produces
\[
  \beta = \pi_1\, -\pi_5\, \pi_3\, -\pi_6\, \pi_2\, -\pi_4\, \pi_7.
\]
On the other hand, first applying $\text{\cdr}_c$ to $\pi$ produces
\[
  \gamma = \pi_1\, \pi_2\, \pi_3\, -\pi_6\, -\pi_5\, -\pi_4\, \pi_7.
\]
By tracking the movement of the pointers $a$ and $b$ and their signs in this step, we see that the segments $\pi_2$ and $\pi_5$ are exchanged when $\text{\cds}_{a,b}$ is applied to $\gamma$, producing $\beta$.

Cases 2 and 3 of Figure \ref{fig:cdsrescuecases} are treated similarly: Partition $\pi$ into the seven corresponding segments, and track the movement of the pointers as the sorting operations are applied.
\end{proof}

\begin{repexample}{ex:maxseq} 
Application of $\text{\cdr}_{(5,6)}$ to $\pi=\lbrack 1,\, 3,\, 5,\, -2,\, -6,\, 4 \rbrack$ produces $\gamma = \lbrack 1,\, 3,\, 5,\, 6,\, 2,\, 4 \rbrack$, to which no further context directed reversals can be applied. This corresponds to $((5,6))$ being a maximal oriented sequence in the overlap graph of $\pi$. The three term oriented sequence $((1,2),\, (2,3),\, (5,6))$ is also maximal, and applying \cdr for these pointers in the order listed in the sequence produces $\delta = \lbrack 1,\, 2,\, 3,\, 5,\,6,\, 4\rbrack$. Observe that $\text{\cds}_{(1,2),(2,3)}$ is applicable to $\gamma$, and that $\delta = \text{\cds}_{(1,2),(2,3)}(\gamma)$.\\
\end{repexample}

Lemma \ref{cdsrescuelemma} combined with the \cdr Revision Theorem, Theorem \ref{tbstheorem}, and its corollary, leads to the \cds Rescue Theorem. In preparation for the proof of this theorem we first give the following two lemmas:

\begin{lemma}\label{max} Let $w\in V$ be an oriented vertex of $(\mathcal{G},f)$. 
The following are equivalent:
\begin{enumerate}
\item{$(w)$ is a maximal oriented sequence.}
\item{A vertex $v$ of $(\mathcal{G},f)$ is oriented if, and only if, $\{w,\;v\}$ is an edge of $(\mathcal{G},f)$.}
\end{enumerate}
\end{lemma}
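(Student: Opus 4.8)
The plan is to unwind the definition of a maximal oriented sequence of length one and reduce the whole statement to a direct computation of the orientation function after a single application of $\textbf{gcdr}$. First I would recall that, since $w$ is oriented in $(\mathcal{G},f)$ by hypothesis, the one-term sequence $(w)$ is automatically an oriented sequence, and it is \emph{maximal} precisely when the graph $(\mathcal{G}_1,f_1)=\textbf{gcdr}((\mathcal{G},f),w)=\textbf{lc}((\mathcal{G},f),\textsf{N}(w))$ has no oriented vertices. Because orientation is governed entirely by the color function, I would discard the edge-toggling clause of $\textbf{lc}$ and track only $f_1$, which by definition satisfies $f_1(v)=1-f(v)$ for $v\in\textsf{N}(w)$ and $f_1(v)=f(v)$ otherwise.

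Next I would translate the condition that $(\mathcal{G}_1,f_1)$ has no oriented vertices, namely $f_1(v)=1$ for every $v\in V$, into a statement about $f$ by splitting on membership in $\textsf{N}(w)$. For $v\in\textsf{N}(w)$ the requirement $1-f(v)=1$ forces $f(v)=0$, so $v$ must be oriented in $(\mathcal{G},f)$; for $v\notin\textsf{N}(w)$ the requirement $f(v)=1$ forces $v$ to be unoriented. Hence maximality of $(w)$ is equivalent to the assertion that the set of oriented vertices of $(\mathcal{G},f)$ is exactly the closed neighborhood $\textsf{N}(w)=\{w\}\cup\{v:\{w,v\}\in E\}$, and all of these implications are reversible, so this is a genuine biconditional.

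Finally I would reconcile this characterization with statement (2). Since $\mathcal{G}$ is simple, $\{w,w\}$ is not an edge, and since $w$ is oriented by hypothesis, the equality ``oriented set $=\textsf{N}(w)$'' holds automatically at the vertex $w$; its entire content at the remaining vertices $v\neq w$ is precisely that $v$ is oriented if and only if $\{w,v\}\in E$, which is statement (2). Chaining the equivalences established in the previous two steps then gives (1)$\iff$(2).

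The computation itself is routine, so the only delicate point — and the step I would flag explicitly — is the bookkeeping around $w$ itself: $\textsf{N}(w)$ is the \emph{closed} neighborhood, so $w\in\textsf{N}(w)$, and one must observe that the standing hypothesis ``$w$ is oriented'' renders the $v=w$ instance of the neighborhood characterization vacuous, leaving exactly the adjacency condition of (2) for $v\neq w$. Getting this boundary case right is what makes the closed-neighborhood definition of $\textsf{N}$ mesh cleanly with the open-neighborhood phrasing of (2).
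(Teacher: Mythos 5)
Your proof is correct and follows essentially the same route as the paper's: both arguments rest on the observation that $\textbf{gcdr}((\mathcal{G},f),w)$ flips orientation exactly on the closed neighborhood $\textsf{N}(w)$ and leaves it unchanged elsewhere, so that maximality of $(w)$ amounts to the oriented vertices of $(\mathcal{G},f)$ being precisely $\textsf{N}(w)$. Your explicit handling of the boundary case $v=w$ (where the closed-neighborhood characterization is vacuous because $w$ is oriented by hypothesis) is in fact slightly more careful than the paper's case analysis, which passes over this point silently.
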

\begin{proof}
{\flushleft{$(1)\Rightarrow(2)$}:} Assume that $(w)$ is a maximal oriented sequence. Consider any $v\not\in N(w)$. Then the orientation of $v$ in the graph $(\mathcal{G}_1,f_1) = \textbf{gcdr}((\mathcal{G},f),w)$ is unchanged from its orientation in $(\mathcal{G},f)$. As $(w)$ is a maximal sequence of oriented vertices, it follows that $v$ is unoriented in $(\mathcal{G}_1,f_1)$, and thus in $(\mathcal{G},f)$. For the rest of $(2)$, we assume that vertex $v$ is not oriented in $(\mathcal{G},f)$. If $v$ were an element of $N(w)$, then in $(\mathcal{G}_1,f_1)$ $v$ would be an oriented vertex. But this contradicts the maximality of $(w)$. 
{\flushleft{$(2)\Rightarrow(1)$}:} Assume that, contrary to $(2)$, there is an oriented vertex $v$ of $\mathcal({G},f)$ such that $\{v,\; w\}$ is not an edge of $\mathcal{G}$. Then $v$ is still an oriented vertex in $(\mathcal{G}_1,f_1)$, whence the oriented sequence $(w)$ is not maximal.
\end{proof}

It follows that if $(w)$ is a maximal sequence in an oriented graph, then no sequence of the form $(a,w)$ of vertices of the oriented graph can be an oriented sequence. Moreover, if an oriented graph has a maximal sequence $(w)$, then only one of its components is oriented.

\begin{lemma}\label{maxextend} Let $w$ be a vertex in the oriented graph $(\mathcal{G},f)$. If $(w)$ is a maximal sequence, then any oriented sequence of the form $(u,v,w)$ of $(\mathcal{G},f)$ is maximal.
\end{lemma}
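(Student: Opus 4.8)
The plan is to reduce the statement to a single application of Lemma~\ref{max}. Write $(\mathcal{G}_1, f_1) = \textbf{gcdr}((\mathcal{G}, f), u)$ and $(\mathcal{G}_2, f_2) = \textbf{gcdr}((\mathcal{G}_1, f_1), v)$. Since $(u,v,w)$ is an oriented sequence by hypothesis, $w$ is an oriented vertex of $(\mathcal{G}_2, f_2)$; hence it suffices to prove that $(w)$ is itself a maximal oriented sequence in $(\mathcal{G}_2, f_2)$, for then $\textbf{gcdr}((\mathcal{G}_2, f_2), w)$ has no oriented vertices, which is exactly the assertion that $(u,v,w)$ is maximal in $(\mathcal{G},f)$. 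Applying Lemma~\ref{max} inside $(\mathcal{G}_2, f_2)$, this reduces to showing that a vertex $x \neq w$ is oriented in $(\mathcal{G}_2, f_2)$ if and only if $\{w, x\}$ is an edge of $\mathcal{G}_2$.

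The engine of the proof is a discrepancy invariant attached to the fixed vertex $w$. To each $x \neq w$ assign the bit $d(x)$ recording whether the orientation of $x$ and the adjacency $\{w,x\}$ \emph{disagree}, so that $d(x)=0$ means $x$ is oriented exactly when $\{w,x\}$ is an edge. By Lemma~\ref{max}, the hypothesis that $(w)$ is maximal in $(\mathcal{G},f)$ says precisely that $w$ is oriented and $d \equiv 0$ on $V \setminus \{w\}$. The single computational step is the claim: if $p \neq w$ is an oriented vertex adjacent to $w$, then $\textbf{gcdr}(\cdot, p) = \textsf{lc}(\cdot, N(p))$ preserves $d(x)$ for every $x \notin \{w, p\}$, and forces $d(p)=0$ (as $p$ becomes isolated and unoriented). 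Unwinding the definition of $\textsf{lc}$: the orientation of such an $x$ flips exactly when $x \sim p$, while the adjacency $\{w,x\}$ toggles exactly when both $w,x \in N(p)$; since $w \in N(p)$ (because $p \sim w$), the latter condition is again just $x \sim p$. The two flips therefore cancel in the XOR defining $d(x)$, which is the whole point.

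With the invariant in hand I apply it twice. First, $u$ is oriented in $(\mathcal{G},f)$ with $u \neq w$ — otherwise $(w,v,w)$ could not be an oriented sequence, since $(w)$ is maximal — so $d\equiv 0$ forces $u \sim w$; thus $\textbf{gcdr}(\cdot, u)$ preserves $d \equiv 0$ on $V \setminus \{w\}$ while flipping the orientation of $w$, so $w$ becomes \emph{unoriented} in $(\mathcal{G}_1, f_1)$. Next, $v$ is oriented in $(\mathcal{G}_1, f_1)$ with $v \neq w$ — otherwise $w$ would be oriented in $(\mathcal{G}_1,f_1)$, contradicting the flip — so the invariant $d\equiv 0$ now read in $(\mathcal{G}_1,f_1)$ forces $v \sim_{\mathcal{G}_1} w$; hence $\textbf{gcdr}(\cdot, v)$ again preserves $d \equiv 0$ and flips $w$ back to oriented. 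Consequently $(\mathcal{G}_2, f_2)$ has $w$ oriented and $d \equiv 0$ on $V \setminus \{w\}$, which is exactly the criterion of Lemma~\ref{max} for $(w)$ to be maximal, closing the reduction.

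The main obstacle, and the part to check with care, is the bookkeeping of orientations through $\textbf{gcdr}(\cdot, p) = \textsf{lc}(\cdot, N(p))$: one must keep straight that $w$ itself is \emph{exempt} from the invariant (its orientation genuinely oscillates oriented $\to$ unoriented $\to$ oriented and is instead controlled by the two adjacency facts $u \sim w$ and $v \sim_{\mathcal{G}_1} w$), and that those adjacency facts are themselves consequences of $d \equiv 0$ together with $u$ and $v$ being oriented at their respective stages. Once the single toggling computation of the second paragraph is verified, the remainder is precisely the stated double application.
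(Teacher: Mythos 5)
Your proof is correct, but it takes a genuinely different route from the paper's. The paper argues by contradiction: it supposes some $(u,v,w,x)$ is an oriented sequence, uses Lemma~\ref{max} to pin down that $w\in N(u)$ and $w\in N_{\mathcal{G}_1}(v)$ and to link the orientation of $v$ to whether $v\in N(u)$, and then runs an exhaustive case analysis (two main cases on $v\in N(u)$, subcases on $x\in N(w)$, and four sub-subcases each on the membership of $x$ in $N(u)$ and $N(v)$) showing that $x$ is always unoriented in $(\mathcal{G}_3,f_3)$ --- with the second main case left to the reader. You instead give a direct argument: the XOR invariant $d(x) = (\text{$x$ oriented}) \oplus (\{w,x\}\in\mathcal{E})$ is exactly the content of Lemma~\ref{max}, and your one computation --- that $\textbf{gcdr}(\cdot,p)=\textsf{lc}(\cdot,N(p))$ flips the orientation of $x$ and toggles the edge $\{w,x\}$ under the identical condition $x\sim p$ whenever $p$ is an oriented neighbour of $w$ --- shows $d$ is preserved off $\{w,p\}$ and zeroed at $p$, while $w$'s own orientation toggles once per step. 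The side conditions you need ($u\neq w$, $v\neq w$, and the adjacencies $u\sim w$, $v\sim_{\mathcal{G}_1}w$) are all correctly extracted from the hypothesis that $(u,v,w)$ is an oriented sequence together with $d\equiv 0$. What your approach buys is the elimination of the case explosion (and of the paper's appeal to ``a similar case analysis'' for Case (B)), plus an immediate generalization the paper's method does not make visible: the same invariant shows that any oriented sequence $(u_1,\dots,u_{2k},w)$ with an even number of terms preceding $w$ is maximal, which is in fact the form in which the result is implicitly used via Corollary~\ref{tbscorollary}.
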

\begin{proof}
Assume that there is an oriented sequence $(u,v,w)$ that is not maximal. Choose a vertex $x$ such that $(u,v,w,x)$ is an oriented sequence. Put $(\mathcal{G}_1,f_1) = \textbf{gcdr}((\mathcal{G},f),u)$, $(\mathcal{G}_2,f_2) = \textbf{gcdr}((\mathcal{G}_1,f_1),v)$, $(\mathcal{G}_3,f_3) = \textbf{gcdr}((\mathcal{G}_2,f_2),w)$. 
As $u$ is an oriented vertex of $(\mathcal{G},f)$ (by hypothesis) Lemma \ref{max} implies:
\begin{equation}\label{winu}
w \in N(u) \mbox{ in } (\mathcal{G},f).
\end{equation}
But then $w$ is unoriented in $(\mathcal{G}_1,f_1)$, while it is oriented in $(\mathcal{G}_2,f_2)$, implying 
\begin{equation}\label{winu}
w \in N(v) \mbox{ in } (\mathcal{G}_1,f_1).
\end{equation}
Suppose that $v\not\in N(u)$ in $(\mathcal{G},f)$. Then as the orientation of $v$ in $(\mathcal{G}_1,f_1)$ is the same as its orientation in $(\mathcal{G},f)$, we find that $v$ is an oriented vertex of $(\mathcal{G},f)$ and thus in $N(w)$, by Lemma \ref{max}.  
If, on the other hand, $v\in N(u)$ in $(\mathcal{G},f)$, then as the orientation of $v$ in $(\mathcal{G}_1,f_1)$ is opposite to its orientation in $(\mathcal{G},f)$, we find that $v$ is an unoriented vertex of $(\mathcal{G},f)$ and thus not a member of $N(w)$, by Lemma \ref{max}. Thus, we must consider the following two cases, (A) and (B):

\begin{figure}[h]
\begin{subfigure}{.5\textwidth}
  \centering
  \begin{tikzpicture}[scale=.5]
\coordinate (a) at (0,0);
\coordinate (b) at (2,0);
\coordinate (c) at (0,2);
\node at (a) [below]{v};
\node at (b) [below]{w};
\node at (c)[right]{u};

only<2->{\node[draw,circle, inner sep=2pt, fill] at (a){};}
\node[draw,circle, inner sep=2pt, fill] at (b){};
\node[draw,circle, inner sep=2pt, fill] at (c){};

only<2->{\draw [black, thick] (a) -- (b);}
only<2->{\draw [black, thick] (b) -- (c);}
\end{tikzpicture} \caption{$v\not\in N(u)$}
\end{subfigure}%
\begin{subfigure}{.5\textwidth}
  \centering
\begin{tikzpicture}[scale=.5]
\coordinate (a) at (0,2);
\coordinate (b) at (0,0);
\coordinate (c) at (2,0);
\node at (a) [above]{u};
\node at (b) [below]{v};
\node at (c)[below]{w};

{\node[draw,circle, inner sep=2pt, fill] at (a){};}
\node[draw,circle, inner sep=2pt] at (b){};
\node[draw,circle, inner sep=2pt,fill] at (c){};

\draw [black, thick] (a) -- (b);
\draw [black, thick] (a) -- (c);    
\end{tikzpicture}\caption{$v \in N(u)$} \label{Case 1}
\end{subfigure}
\end{figure}

{\flushleft{\underline{Case (A), $v\not\in N(u)$: }}} There are two subcases regarding $x$:

{\flushleft{\underline{Case 1: $x\in N(w)$: }}} 
By Lemma \ref{max} $x$ is an oriented vertex in $(\mathcal{G},f)$. We consider the four possibilities $x\not\in N(u)\cup N(v)$, $x\in N(u)\setminus N(v)$, $x\in N(v)\setminus N(u)$ and $x\in N(v)\cap N(u)$. 

\begin{enumerate}
\item{\underline{$x\not\in N(u)\cup N(v)$:} In this case $x$ is oriented in $(\mathcal{G}_2,f_2)$ and still has an edge with $w$, so that $x$ is unoriented in $(\mathcal{G}_3,f_3)$.}
\item{\underline{$x\in N(u)\setminus N(v)$:} Now $u$, $w$ and $x$ are oriented and pairwise have an edge, so that in $(\mathcal{G}_1,f_1)$ these three vertices are unoriented and $x$ has no edge with any of $u$, $v$ or $w$. But then $x$ is unoriented also in $(\mathcal{G}_3,f_3)$.}
\item{\underline{$x\in N(v)\setminus N(u)$:} In this case in $(\mathcal{G}_1,f_1)$ the vertices $x$, $v$ and $w$ pairwise have edges, $w$ is unoriented and $v$ and $x$ are oriented. But then in $(\mathcal{G}_2,f_2)$ there is no edge among any two of the vertices $x$, $w$ and $v$, and $x$ is unoriented. It follows that $x$ is unoriented also in $(\mathcal{G}_3,f_3)$.}
\item{\underline{$x\in N(u)\cap N(v)$:} In this case the sets of vertices $\{u,\; x,\; w\}$ and $\{v,\; x,\; w\}$ each pairwise has edges in $(\mathcal{G},f)$, and all are oriented vertices. Then in $(\mathcal{G}_1,f_1)$ the edges $\{x,\; v\}$ and $\{w,\; v\}$ are still present, but $x$ and $w$ are both unoriented while $v$ is oriented. Thus in $(\mathcal{G}_2,f_2)$ $w$ and $x$ are oriented and there is an edge $\{w,\; x\}$. But then $x$ is unoriented in $(\mathcal{G}_3,f_3)$.}
\end{enumerate}

{\flushleft{\underline{Case 2: $x\not\in N(w)$}}} Similarly, a consideration of cases shows that also in this case we would have a contradiction with the assumption that $(u,\; v,\; w,\; x)$ is an oriented sequence.
\begin{enumerate}
\item{\underline{$x\not\in N(u)\cup N(v)$:} In this case $x$ is unoriented in $(\mathcal{G}_3,f_3)$.}
\item{\underline{$x\in N(u)\setminus N(v)$:} Then $x$ is oriented in $(\mathcal{G}_1,\; f_1)$ and a member of $N(w)$ of the now unoriented $w$. As we are in case (A), also $v\in N(w)$ and $v$ is oriented. But then in $(\mathcal{G}_2,f_2)$ both $w$ and $x$ are oriented and $\{w,\; x\}$ is an edge in $(\mathcal{G}_2,f_2)$. It follows that $x$ is unoriented in $(\mathcal{G}_3,f_3)$.}
\item{\underline{$x\in N(v)\setminus N(u)$:} Now in $(\mathcal{G}_1,f_1)$ $x$ is still a member of $N(v)$ and unoriented, so that in $(\mathcal{G}_2,f_2)$ both $x$ and $w$ are oriented and $\{w,\; x\}$ is an edge. But then $x$ is unoriented in $(\mathcal{G}_3,f_3)$.}
\item{\underline{$x\in N(u)\cap N(v)$:} In $(\mathcal{G}_1,f_1)$ $x$ and $v$ are oriented while $w$ is unoriented, and there is an edge between any two of $\{w,\; x,\; v\}$. But then in $(\mathcal{G}_2,f_2)$ $x$ is unoriented, and there is no edge between $w$ and $x$. It follows that in $(\mathcal{G}_3,f_3)$, $x$ is unoriented.}
\end{enumerate}

This analysis shows that when $v$ is not in $N(u)$ in $(\mathcal{G},f)$, then $(u,\; v,\; w,\; x)$ is not an oriented sequence.

Also in Case (B) a similar case analysis exhibits a contradiction with the assumption that $(u,\; v,\; w,\; x)$ is an oriented sequence.

Thus, existence of a non-maximal oriented sequence $(u,v,w)$ leads to a contradiction. 
\end{proof}

\begin{theorem}[The \text{\cds} Rescue Theorem]\label{cdsrescue}
Let $\pi$ be a \cdr-sortable signed permutation.  Then all \cdr fixed points of $\pi$ are \cds-sortable.
\end{theorem}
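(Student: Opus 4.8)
The plan is to transfer the statement to the oriented overlap graph, recognize a \cdr fixed point as the output of a maximal oriented sequence, and then convert the ``missing'' reversals supplied by the \cdr Revision Theorem (Theorem~\ref{tbstheorem}) into \cds operations by means of the \textsf{cdr}-\textsf{cds} Lemma, so that a \cds sorting sequence for the fixed point is exhibited explicitly.

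First I would fix a \cdr fixed point $\sigma$ of $\pi$ together with a maximal sequence of \cdr operations producing it. Through $\Xi_n$ and the commuting square of Theorem~\ref{cdrgcdr} this becomes a maximal oriented sequence $(v_1,\dots,v_m)$ in $\Xi_n(\pi)$ with $\sigma=\cdr_{v_m}\circ\cdots\circ\cdr_{v_1}(\pi)$. If the sequence is total then $\sigma$ is the identity and there is nothing to prove, so assume it is maximal but not total. Because $\pi$ is \cdr-sortable a total sequence exists; by Theorem~\ref{cdrfundthm} every total sequence has one common length $m'$, and since a maximal non-total sequence can always be completed to a total one by inserting pairs (Corollary~\ref{tbscorollary}), we have $m<m'$ with $m'-m$ a positive even integer. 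I would then argue by induction on $m'-m$, the base case $m'-m=0$ forcing totality and hence $\sigma=\mathrm{id}$.

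For the inductive step, inspect the graph $(\mathcal{G}_{m-1},f_{m-1})$ reached just before the last reversal. There $(v_m)$ is a one-term maximal sequence, so by the remark following Lemma~\ref{max} only one of its components is oriented; restricting to that component (\textbf{gcdr} being local to the component of the chosen vertex) lets me invoke the \cdr Revision Theorem in the favorable configuration where $(v_m)$ is itself non-total there. It furnishes a pair $(r_1,r_2)$ such that $(r_1,r_2,v_m)$ is an oriented sequence along which $r_1$ and $r_2$ stay unoriented after $v_m$, that is, in $\Xi_n(\sigma)$; Lemma~\ref{maxextend} then upgrades $(r_1,r_2,v_m)$ to a maximal sequence, so $(v_1,\dots,v_{m-1},r_1,r_2,v_m)$ is maximal of length $m+2\le m'$. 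Applying the \textsf{cdr}-\textsf{cds} Lemma with $a=r_1$, $b=r_2$, $c=v_m$ and base point $\cdr_{v_{m-1}}\circ\cdots\circ\cdr_{v_1}(\pi)$ shows that $\cds_{(r_1,r_2)}$ is applicable to $\sigma$ and that $\cds_{(r_1,r_2)}(\sigma)$ is exactly the \cdr fixed point produced by this longer maximal sequence. Since $m'-(m+2)<m'-m$, the induction hypothesis makes $\cds_{(r_1,r_2)}(\sigma)$ \cds-sortable, and prefixing the single operation $\cds_{(r_1,r_2)}$ then sorts $\sigma$.

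The step I expect to be the main obstacle is that the repairing pair of the Revision Theorem need not be insertable immediately before the final reversal. This is transparent when the unoriented component of $\Xi_n(\sigma)$ is the one disturbed by $v_m$, but in general a defect may be split off at an earlier reversal, after which the remaining reversals act on a different component, and then $(v_m)$ is total in its own component and the pair must be inserted in the interior of the sequence. To still recognize the fixed point reached as $\cds_{(r_1,r_2)}(\sigma)$ one must push the \cds past the intervening reversals. I would close this gap with an independence lemma: a \cds on pointers that remain unoriented commutes with \cdr operations applied to a \emph{different} component of the overlap graph, the point being that arcs in distinct components do not interleave. Proving that commutation by the same segment-tracking bookkeeping that drives the \textsf{cdr}-\textsf{cds} Lemma, and then peeling off one defect component at a time using this independence, is where I expect the real work to lie.
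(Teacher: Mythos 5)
Your overall strategy is genuinely different from the paper's: you induct on the deficiency $m'-m$ between the length of a total sequence and the length of the given maximal sequence, inserting the repair pair of the \cdr Revision Theorem immediately before the final reversal and converting it into a single \cds on the fixed point via the \textsf{cdr}-\textsf{cds} Lemma. The paper instead inducts on $n$, collapsing adjacencies to drop into $\textsf{S}^{\pm}_{n-1}$, and always inserts the repair pair $(r_1,r_2)$ at the \emph{head} of the sequence (choosing it, via the Fundamental Theorem of Oriented Graphs restricted to the vertex set of the newly created unoriented components together with $a_1$, so that $(r_1,r_2,a_1)$ is maximal by Lemma~\ref{maxextend}). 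Where your favorable case applies ($\ell=m-1$), your step coincides exactly with the paper's Case 1.

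However, the gap you flag yourself is genuine and is not closed by what you have written. Theorem~\ref{tbstheorem} only guarantees an insertion position $\ell<m$, and the unoriented component of $\Xi_n(\sigma)$ may already be an unoriented component of $(\mathcal{G}_{m-1},f_{m-1})$ --- created by some earlier reversal --- in which case no pair inserted at position $m-1$ can repair it, and $(v_m)$ is total in its own component so the Revision Theorem applied there returns nothing. Your proposed fix, a commutation lemma letting $\text{\cds}_{(r_1,r_2)}$ slide past the intervening reversals $v_{\ell+1},\dots,v_m$, is the actual hard content and is not obviously true as stated: arcs in distinct components of the overlap graph do not cross, but they can be nested, so the blocks swapped by the \cds can contain occurrences of the pointers $v_j$, and the reversals $\text{\cdr}_{v_j}$ relocate and re-sign the occurrences of $r_1$ and $r_2$; establishing that the permutation-level operations commute requires the same segment-tracking bookkeeping as Lemma~\ref{cdsrescuelemma}, carried out across an arbitrary number of intervening reversals, which is precisely what the paper's adjacency-collapsing induction on $n$ is designed to avoid. (A secondary point: Theorem~\ref{tbstheorem} and Corollary~\ref{tbscorollary} are stated for graphs with a unique oriented component, so your appeal to them for a general \cdr-sortable $\pi$, whose overlap graph may have several oriented components and isolated vertices, needs the same induced-subgraph restriction the paper performs in its Subcase 2(b).) Until the commutation lemma is proved, or the insertion is reorganized so that only one reversal ever intervenes, the inductive step is incomplete.
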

\begin{proof}
Let $\pi$ be a \cdr sortable signed permutation. Then the overlap graph of $\pi$ has no unoriented components. We prove this result by induction on the positive integer $n$   for which $\pi$ is a member of $\textsf{S}^{\pm}_n$.  For $n=1$ there is nothing to prove. 

Thus, assume $n>1$ and that the statement of the theorem has been confirmed for all $k<n$. Let  $\pi\in \textsf{S}^{\pm}_n$ be a \cdr-sortable permutation with no adjacencies, and with some (non-sorted) \cdr-fixed point $\beta$. Consider an oriented sequence $(a_1,\cdots,a_{m})$ of pointers used successively to obtain $\beta$ from $\pi$. 

{\flushleft{\underline{Case 1: $m=1$.}}} Then $(a_1)$ is a maximal oriented sequence for the oriented overlap graph $(\mathcal{G},f)$ associated with $\pi$. Now by the Fundamental Theorem of Oriented Graphs, Theorem \ref{gcdrtheorem}, choose an oriented vertex $r_1$ of $\pi$ such that the overlap graph of $\gamma_1 = \text{\cdr}_{r_1}(\pi)$ has no unoriented components, and then choose an oriented vertex $r_2$ of $\gamma_1$ so that the overlap graph of $\gamma_2 = \text{\cdr}_{r_2}(\gamma_1)$ has no unoriented components. Then $\gamma_2$ is \cdr sortable and by Lemma \ref{maxextend} $(r_1,\, r_2,\, a_1)$ is a maximal oriented sequence of pointers for $\pi$, whence $(a_1)$ is a maximal oriented sequence of pointers for $\gamma_2$. Now $\gamma_2$ has at least two adjacencies. Collapsing these adjacencies produces a signed permutation $\gamma_2^{*}$ which has a maximal oriented sequence $(a_1^{*})$ of pointers, and is a member of $\textsf{S}_{k}^{\pm}$ for a $k<n$. Applying $\text{\cdr}_{a_1^{*}}$ to $\gamma^{*}$ produces a \cdr fixed point $\delta^{*}$ of $\gamma^{*}$ which, by the induction hypothesis, is \cds sortable. Reinstating the adjacencies we find that $\delta$ is a \cdr fixed point of $\text{\cdr}_{a_1}(\gamma_2)$, and is \cds sortable. However, with $\beta = \text{\cdr}_{a_1}(\pi)$, we have by Lemma \ref{cdsrescuelemma} that $\delta = \text{\cds}_{r_1,r_2}(\beta)$, and it follows that $\beta$ is \cds sortable.

{\flushleft{\underline{Case 2: $m>1$.}}} The induction hypothesis is that the statement is true for permutations $\delta$ in $\textsf{S}^{\pm}_k$, $k <n$, and all oriented sequences. 
{\flushleft{Subcase 2 (a): }} The overlap graph of $\pi_1 = \textbf{cdr}_{a_1}(\pi)$ has no unoriented component. In this case, as $\pi_1$ has an adjacency, we may reduce $\pi_1$ through collapsing the adjacency, to an equivalent permutation $\pi^{*}_1$ in $\textsf{S}_{n-1}^{\pm}$ in which the oriented sequence corresponding to $(a_2,\;\cdots,\; a_m)$ sorts the equivalent permutation to the fixed point $\beta^{*}$ corresponding to $\beta$ under the same collapse of the same adjacency. By the induction hypothesis $\beta^*$ is \cds-sortable, and thus $\beta$ is \cds-sortable.

{\flushleft{Subcase 2 (b): }} The overlap graph of $\pi_1 = \textbf{cdr}_{a_1}(\pi)$ has an unoriented component. Restrict attention to the set of those vertices of the overlap graph of $\pi$ that appear in the unoriented components of the overlap graph of $\pi_1$, together with the vertex $a_1$. Let this set of vertices be $V_1$. The vertex subgraph of $(\mathcal{G}_1,f_1)$ the oriented graph of $\pi$ induced by the set of vertices $V_1$ is an oriented graph which has no unoriented components, and $(a_1)$ is a maximal sequence in this subgraph. By the Fundamental Theorem of Oriented Graphs, Theorem \ref{gcdrtheorem}, we find an oriented vertex $r_1\in V_1$ such that $(\mathcal{G}_2,f_2) = \text{\bf gcdr}((\mathcal{G}_1,f_1),r_1)$ has no unoriented components. Observe that $r_1 \neq a_1$, and that in the latter graph $a_1$ is an unoriented vertex. By yet another application of the Fundamental Theorem of Oriented graphs we find a second vertex $r_2\in V_1\setminus \{r_1\}$, oriented in $(\mathcal{G}_2,f_2)$, such that $\text{\bf gcdr}((\mathcal{G}_2,f_2) ,r_2)$ has no unoriented components.

By Lemma \ref{maxextend} the sequence $(r_1,\, r_2,\, a_1)$ is maximal in $({\mathcal G}_1,f_1)$. Since the sequence $(r_1,\, r_2)$ produces no unoriented components in this graph, the argument in the proof of Theorem 3 of \cite{TBS} shows that this sequence, also oriented in the overlap graph of $\pi$, produces no unoriented components from the overlap graph of $\pi$.

Consider $\gamma_1 = \text{\cdr}_{r_1}(\pi)$. Then $\gamma_1$ is an element of $\textsf{S}^{\pm}_n$ with an adjacency, and the overlap graph of $\gamma_1$ has no unoriented components. By the induction hypothesis, and the argument in Subcase 2 (a), the \cdr fixed point of $\gamma_1$ arising from applications of \cdr using the sequence of pointers $(r_2,\, a_1,\, \cdots,\, a_m)$ is \cds-sortable. By Lemma \ref{cdsrescuelemma} the result of applying the sequence of \cdr operations corresponding to $(r_1,\, r_2,\, a_1,\, \cdots,\, a_m)$ to $\pi$ results in $\text{\cds}_{r_1,r_2}(\beta)$. As the latter is \cds-sortable, so is $\beta$.

This completes the proof.
\end{proof}

\begin{repexample}{ex:maxseq}
We illustrate Theorem \ref{cdsrescue} with  $\pi = \lbrack 1,\, 3,\, 5,\, -2,\, -6,\, 4 \rbrack$. 
\end{repexample}
As noted earlier, $\pi$ is \cdr sortable and application of $\text{\cdr}_{(5,6)}$ to $\pi$ produces $\gamma = \lbrack 1,\, 3,\, 5,\, 6,\, 2,\, 4 \rbrack$ to which no further \cdr can be applied. Observe that $\text{\cds}_{(1,2),(2,3)}$ is applicable to $\gamma$, and that $\delta =  \lbrack 1,\, 2,\, 3,\, 5,\,6,\, 4\rbrack = \text{\cds}_{(1,2),(2,3)}(\gamma)$. Next, $\text{\cds}_{(3,4),(4,5)}$ is applicable to $\delta$ and produces the identity permutation. The \cdr fixed point $\gamma$ of the signed permutation $\pi$ is \cds sortable.\\

As a second application of these methods and results we find:
\begin{theorem}[\cdr Steps Theorem]\label{indiscriminatecdr} Let $\pi$ be a \cdr sortable signed permutation. Suppose that $k$ applications of \cdr produces a \cds fixed point $\beta$ of $\pi$, and that $m$ applications of \cds to $\beta$ results in the identity permutation. Then $\pi$ is \cdr sortable by $k+2m$ applications of \cdr.
\end{theorem}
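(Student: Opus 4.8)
The plan is to induct on $m$, the number of \cds operations needed to sort $\beta$, converting each swap into two extra reversals. Two external facts drive the argument. The \cds Inevitability Theorem of \cite{AHMMSTW} tells us that every \cds sorting of $\beta$ has the same length $m$, so that a single applicable swap always lowers the remaining \cds count by exactly one; and the Hannenhalli--Pevzner theorem (Theorem \ref{cdrfundthm}) tells us that all \cdr sortings of $\pi$ have a common length, so it suffices to exhibit one \cdr sorting of length $k+2m$ in order to conclude that $\pi$ is \cdr sortable by $k+2m$ applications. The engine of the induction is the following fact, extracted from the proof of the \cds Rescue Theorem (Theorem \ref{cdsrescue}): if $\beta$ is a non-identity \cdr fixed point of $\pi$ reached from $\pi$ by a maximal oriented sequence of \cdr operations of length $k$, then there exist pointers $r_1,r_2$ such that $\text{\cds}_{r_1,r_2}$ is applicable to $\beta$, the permutation $\text{\cds}_{r_1,r_2}(\beta)$ is again a \cdr fixed point of $\pi$, and prepending $(r_1,r_2)$ to the length-$k$ sequence produces a maximal oriented sequence of length $k+2$ reaching $\text{\cds}_{r_1,r_2}(\beta)$.

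Granting this, the base case $m=0$ is immediate: here $\beta$ is the identity, so the $k$ operations reaching $\beta$ already sort $\pi$, giving a \cdr sorting of length $k = k + 2\cdot 0$. For the inductive step, assume the statement for \cdr fixed points that are \cds sortable in $m-1$ swaps, and let $\beta$ require $m\ge 1$ swaps. Choose $r_1,r_2$ as in the extracted fact and set $\beta' = \text{\cds}_{r_1,r_2}(\beta)$, so that $\beta'$ is a \cdr fixed point of $\pi$ reached by $k+2$ \cdr operations. Since $\beta$ is \cds sortable, the \cds Inevitability Theorem forces $\beta'$ to be \cds sortable in exactly $m-1$ steps. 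Applying the induction hypothesis to $\pi$ with the fixed point $\beta'$ yields a \cdr sorting of $\pi$ of length $(k+2) + 2(m-1) = k+2m$. Theorem \ref{cdrfundthm} then promotes this to the full conclusion that every \cdr sorting of $\pi$ has length $k+2m$.

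The main obstacle is establishing the extracted fact rigorously, and in particular verifying the two points the Rescue proof uses implicitly. First, one must confirm that the prepended sequence $(r_1,r_2,c_1,\ldots,c_k)$ is genuinely a maximal oriented sequence for the overlap graph of $\pi$, so that $\beta'$ really is a \cdr fixed point rather than merely \cds sortable; I would supply this by combining Lemma \ref{maxextend} with the Fundamental Theorem of Oriented Graphs (Theorem \ref{gcdrtheorem}) to select $r_1,r_2$ so that $\text{\cdr}_{r_2}\circ\text{\cdr}_{r_1}(\pi)$ still has no unoriented component and $(r_1,r_2,c_1)$ remains maximal. Second, one must check that $\text{\cds}_{r_1,r_2}(\beta)$ coincides with the output of that length-$(k+2)$ reversal sequence, which is precisely the content of Lemma \ref{cdsrescuelemma}. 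A secondary matter is the bookkeeping of adjacencies (isolated unoriented vertices): collapsing them changes no \cds or \cdr step count and may be suppressed, but one should verify that $r_1,r_2$ can be taken among the active pointers of $\beta$. Once the extracted fact is in hand the counting is routine, and the congruence $k \equiv k+2m \pmod 2$ that underlies the \cdr Parity Theorem drops out as an immediate corollary.
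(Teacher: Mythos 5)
The paper states this theorem without any written proof---it appears only as ``a second application of these methods and results''---and your proposal assembles exactly the intended ingredients: the Rescue Theorem's choice of $r_1,r_2$ (via Theorem \ref{gcdrtheorem}, Lemma \ref{maxextend} and Corollary \ref{tbscorollary}), Lemma \ref{cdsrescuelemma} to identify each inserted \cdr pair with one \cds applied to the fixed point, the \cds Inevitability Theorem to decrement $m$ by exactly one per swap, and Theorem \ref{cdrfundthm} to fix the common length; reading the hypothesis's ``\cds fixed point'' as the evidently intended ``\cdr fixed point'' is also the right call. Your argument is correct and takes essentially the same (implicit) route as the paper, and the one gap you flag---that Lemma \ref{cdsrescuelemma} is stated only for a single \cdr following the inserted pair and must be propagated through the remaining operations via the adjacency-collapsing induction---is present in the paper's own use of that lemma as well.
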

Thus, the number of applications of \cdr required to sort a \cdr sortable signed permutation can be computed by indiscriminate applications of \cdr and \cds.
Theorem \ref{indiscriminatecdr} also finally explains the 2-to-1 ratio in \cdr and \cds operations observed in \cite{HNS} in constructing phylogenies among different Muller elements for several species of fruitflies, and justifies using this ratio in constructing distance matrices towards the phylogenetic analysis - see pp. 14-15, footnote 11 and for example Figures 9 and 10 of \cite{HNS}.  

\section{The \cdr parity theorem and combinatorial games}

It has been proven in \cite{AHMMSTW} that for each permutation, if two sequences of applications of \cds lead to a \cds fixed point, then these two sequences have the  same length. This fails for the sorting operation \cdr. However, for \cdr the parity of the lengths of sequences of applications of \cdr leading to \cdr fixed points of a given permutation $\pi$ is an invariant. 

\begin{theorem}[The \textbf{cdr} Parity Theorem]\label{cdrparity}
For each signed permutation $\pi$, the lengths of maximal sequences of pointers are of the same parity. 
\end{theorem}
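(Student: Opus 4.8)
The plan is to reduce the statement to the case of a \cdr-sortable permutation and then read off the parity from the \cdr Steps Theorem (Theorem \ref{indiscriminatecdr}), which already bundles together the \cds Rescue Theorem and the Hannenhalli--Pevzner length invariant in exactly the form a parity count needs. So the whole argument has two halves: an easy computation in the sortable case, and a structural reduction that brings the general case back to it.

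First I would dispose of the sortable case. Let $\pi$ be \cdr-sortable, so by Theorem \ref{cdrsortfundthm} its overlap graph has no unoriented component, and by Theorem \ref{cdrfundthm} every sequence of \cdr operations that sorts $\pi$ has one and the same length $N$. An arbitrary maximal sequence of pointers has length $k$ equal to the number of \cdr operations needed to reach some \cdr fixed point $\beta$ of $\pi$. By the \cds Rescue Theorem (Theorem \ref{cdsrescue}) $\beta$ is \cds-sortable, and since the \cds-sorting sequences of a fixed permutation all share a common length (the \cds length result of \cite{AHMMSTW}), there is a well-defined number $m$ of \cds operations sorting $\beta$ to the identity. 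The \cdr Steps Theorem then asserts that $\pi$ is \cdr-sortable in $k+2m$ steps; combined with the invariance of $N$ this forces $k+2m=N$, hence $k\equiv N \pmod 2$. As $N$ depends only on $\pi$, all maximal sequences have length of parity $N \bmod 2$, and the sortable case is finished.

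Next I would remove the sortability hypothesis. The operation \textbf{gcdr} at an oriented vertex $v$ is the local complement on the closed neighborhood $\textsf{N}(v)$, which alters only the edges and orientations inside the component of $v$; consequently an unoriented component of the overlap graph of $\pi$ can never acquire an oriented vertex and is never touched by any \cdr operation, and the same holds for isolated unoriented vertices. Thus every maximal sequence of pointers for $\pi$ uses only the vertices lying in oriented components or being isolated oriented vertices, so the set of lengths of maximal sequences for $\pi$ coincides with the set of lengths of maximal sequences for the induced oriented subgraph $\mathcal{H}$ on those vertices (operations in distinct components commute and do not interact, and each isolated oriented vertex contributes exactly one forced operation). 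By construction every component of $\mathcal{H}$ is oriented, so $\mathcal{H}$ has no unoriented component.

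The hard part will be re-entering the world of signed permutations: to invoke the sortable case I must know that $\mathcal{H}$ is itself the oriented overlap graph of some signed permutation $\pi'$, because Theorems \ref{cdrfundthm} and \ref{indiscriminatecdr} are statements about permutations rather than about abstract oriented graphs. Realizing each oriented component separately by a sub-permutation on a disjoint block of symbols and concatenating gives a $\pi'$ with overlap graph $\mathcal{H}$; since $\mathcal{H}$ has no unoriented component, $\pi'$ is \cdr-sortable, and the sortable case yields a common parity for its maximal sequences, hence for those of $\pi$. Pinning down this realizability---that the components of the overlap graph of a signed permutation are again overlap graphs of signed permutations---is the one structural fact that must be nailed down; it is standard in the sorting-by-reversals literature, where the components of the overlap graph index independent subproblems. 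I would deliberately avoid the alternative of proving the purely graph-theoretic parity statement for connected oriented graphs head-on, since that route appears to require resolving Problem \ref{totalsequenceproblem} on the lengths of total sequences, which is left open in this paper; routing through realizability and Theorem \ref{cdrfundthm} sidesteps that open question entirely.
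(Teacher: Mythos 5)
Your proposal is correct, and its skeleton is the same as the paper's: handle the \cdr-sortable case first, then reduce the general case by discarding the unoriented components of the overlap graph (which no \cdr operation can ever touch) and realizing the remaining oriented part as the overlap graph of a \cdr-sortable permutation $\alpha'$. Where you diverge is in the sortable case. The paper works directly from Corollary \ref{tbscorollary}: any maximal sequence can be padded, two vertices at a time, to a total sequence, so every maximal length differs from the common sorting length $N$ of Theorem \ref{cdrfundthm} by an even number. You instead route through the \cds Rescue Theorem and the \cdr Steps Theorem to get $k+2m=N$ for a maximal sequence of length $k$. The two arguments are underpinned by the same fact (even padding via the \cdr Revision Theorem and the \textsf{cdr}-\textsf{cds} Lemma), so yours is a legitimate repackaging rather than a new idea; its only cost is that it leans on Theorem \ref{indiscriminatecdr}, whose proof the paper never writes out explicitly, and on the \cds length invariant of \cite{AHMMSTW}, neither of which is needed if one quotes Corollary \ref{tbscorollary} directly. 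One genuinely useful feature of your write-up is that you isolate the realizability step in the reduction --- that the oriented-components subgraph is again the overlap graph of some signed permutation, which is what licenses applying the permutation-level Theorems \ref{cdrfundthm} and \ref{cdsrescue} to $\alpha'$. The paper simply asserts the existence of such an $\alpha'$ (``Consider some $\alpha'$ that has the same move graph as $\alpha$ without the unoriented component''), so you have not introduced a gap the paper avoids; you have merely made visible the one structural fact both arguments take on faith. Your instinct to avoid attacking the purely graph-theoretic statement head-on is also consistent with the paper, which leaves Problem \ref{totalsequenceproblem} open.
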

\begin{proof}
Let a signed permutation $\alpha\in\textsf{S}^{\pm}_n$ be given.

{\flushleft\underline{Case 1: $\alpha$ is  \textbf{cdr} sortable: }} 
First note that any two total sequences of oriented vertices of the corresponding overlap graph are of the same length, by Theorem \ref{cdrfundthm}. Consider two sequences of pointers, say $S_1, S_2$, that each is maximal. Applications of \cdr for these pointers thus result in \cdr-fixed points $\beta_1$ and $\beta_2$ respectively. By Corollary \ref{tbscorollary} we can extend each of $S_1$ and $S_2$ individually by two pointers at a time until a total sequence of pointers is reached, meaning the identity permutation results from applications of \cdr. By adding an even number of terms to a sequence, we do not change the parity of the lengths of these sequences. Thus, as the two sequences ${S_1}', {S_2}'$ which sort the permutation are the same length, and thus these lengths have same parity, also the lengths of $S_1$ and $S_2$ are the same parity. 

{\flushleft\underline{Case 2: $\alpha$ is not {\textbf{cdr}}-sortable.}} 
By Theorem \ref{cdrsortfundthm} the overlap graph of $\alpha$ has unoriented components. The only pointers to which \cdr can be applied are vertices in the oriented components of the overlap graph of $\alpha$. Consider some $\alpha`$ that has the same move graph as $\alpha$ without the unoriented component. Then $\alpha'$ has the same sequences of moves possible as $\alpha$. As $\alpha'$ does not have an unoriented component, Case 1 applies, whence lengths of all sequences for $\alpha'$ are of the same parity. 
\end{proof}

Example \ref{ex:maxseq} provides an illustration of the \cdr parity phenomenon: \cdr fixed points are reached after 1, 3 or 5 applications of \cdr. The \cdr Parity Theorem can also be applied to a question about combinatorial games based on \cdr, defined for signed permutations in \cite{AHMMSTW}. 

In the oriented graph context these games are defined as follows:  Let an oriented graph $(\mathcal{G},f)$ on a finite set of vertices be given.  Player ONE starts the game by selecting an oriented vertex $o_1$ and computing the graph $(\mathcal{G}_1,f_1) = \textbf{gcdr}((\mathcal{G},f), o_1)$. Then player TWO selects an oriented vertex $t_1$ of $(\mathcal{G}_1,f_1)$ and computes the graph $(\mathcal{G}_2,f_2) = \textbf{gcdr}((\mathcal{G}_1,f_1),t_1)$, and so on. The game continues until no  more legal moves are possible - \emph{i.e.}, there are no more oriented vertices left in the graph. 

The \emph{normal play} rule version of the game is denoted $\textsf{N}((\mathcal{G},f),\text{\bf gcdr})$. In this game, the player able to make the last legal move wins. In the special case when the graph $(\mathcal{G},f)$ is the oriented overlap graph of a signed permutation $\alpha$, this game is denoted $\textsf{N}(\alpha,\text{\cdr})$. In the \emph{misere play} version, the player making the last legal move looses. The corresponding notation for the misere play games is $\textsf{M}((\mathcal{G},f),\textbf{gcdr})$ or $\textsf{M}(\alpha,\textbf{cdr})$.

Thus, the normal and misere version of the \cdr game on signed permutations are solved: In order to determine who has the winning strategy, just determine the length of a single play of the game. This shows that decision problems \textbf{D.6} and \textbf{D.7} of \cite{AHMMSTW} are linear time decision problems.

\section{The ciliates}

According to the model described in \cite{PER1} the micronuclear precursors of genes in ciliates should be sortable or reverse sortable by \cdr or \cds to produce functional genes. For some indication of the scope of sorting required during ciliate micronuclear decryption, consider the following: Findings reported in \cite{CL} indicate that at least 3593 genes on 2818 chromosomes of the ciliate \emph{Sterkiella histriomuscorum} have encrypted micronuclear precursors. 1676 of these encrypted precursors contain at least one inverted element. \cite{CL} also reports an example of an encrypted precursor organized into 245 precursor segments.  

Sample genes from ciliates show that both the \cdr and the \cds operations are necessary to sort or reverse sort signed permutations representing micronuclear precursors of genes.
We discuss a small sample of micronuclear precursors, and their sortability by \cdr and \cds. More examples can be found at the ciliate genome rearrangement database \cite{mdsiesdb}.  

\begin{center}{\bf Example: The Actin I gene}
\end{center}

{\flushleft{\underline{\emph{Oxytricha nova}:} }}
Decryption of the micronuclear precursor of the Actin I gene in the ciliate species \emph{Oxytricha nova}, reported in \cite{GPOLC} (see Fig. 3 there),  requires sorting of the signed permutation $\alpha = \lbrack 3,\; 5,\; 4,\; 6,\; 8,\; -2,\; 1,\; 7\rbrack$. The overlap graph of $\alpha$, shown in Figure \ref{fig:ONovaActinIOverlapGraph}, has an unoriented component. $\alpha$ is neither \cdr sortable, nor \cds-sortable, but requires both \cdr and \cds for sorting.
\begin{figure}[h]
\centering
\begin{tikzpicture}[scale = .15]
\coordinate (red) at (0,-10);
\coordinate (orange) at (-8,-6);
\coordinate (yellow) at (-10,2);
\coordinate (green) at (-4,9);
\coordinate (blue) at (4, 9);
\coordinate (purple) at (10,2);
\coordinate (gray) at (8,-6);

\node at (green) [above] {(1,2)};
\node at (blue) [above] {(2,3)};
\node at (purple) [right] {(3,4)};
\node at (gray) [right] {(4,5)};
\node at (red) [below] {(5,6)};
\node at (orange) [left] {(6,7)};
\node at (yellow) [left] {(7,8)};

\draw [thick, fill = black] (green) circle (15 pt);
\draw [thick, fill = black] (blue) circle (15 pt);
\draw [thick, fill = white] (purple) circle (15 pt);
\draw [thick, fill = white] (gray) circle (15 pt);
\draw [thick, fill = white] (red) circle (15 pt);
\draw [thick, fill = white] (orange) circle (15 pt);
\draw [thick, fill = white] (yellow) circle (15 pt);
\draw [thick] (blue) -- (yellow);
\draw [thick] (yellow) -- (orange);
\draw [thick] (orange) -- (blue);
\draw [thick] (gray) -- (purple);
\draw [thick] (red) -- (gray);
\draw [thick] (purple) -- (red);

\end{tikzpicture}
\caption{The overlap graph of the Actin I micronuclear precursor in \emph{O. nova} }\label{fig:ONovaActinIOverlapGraph}
\end{figure}
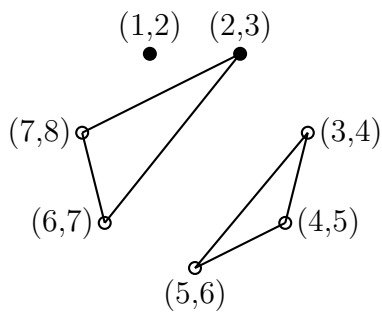
Applications of \cdr terminate in the permutation $\lbrack -8,\; -7,\; -6,\; -4,\; -5,\; -3,\; -2,\; -1\rbrack$, which has item $-5$ out of order. An application of \cds produces the permutation $\lbrack -8,\; -7,\; -6,\; -5,\; -4,\; -3,\; -2,\; -1\rbrack$, which represents a functional Actin I gene in the organism.

{\flushleft{\underline{\emph{Uroleptus pisces}:} }}
The structures of the micronuclear precursors of the Actin I gene for \emph{U. pisces} 1 and \emph{U. pisces} 2, reported in \cite{DP}, were given in the introduction. We treat $\alpha_1 = \lbrack 1,\; 3,\;  -7,\;  -5,\;  14,\;  2,\;  4,\;  6,\;  9,\;  12,\;  -11,\;  -8,\;  13,\;  15,\;  -10\rbrack$, the precursor for \emph{U. pisces} 1. 
To determine \cdr sortability of $\alpha_1$ we construct the overlap graph of $\alpha_1$, shown in Figure \ref{fig:OverlapGraphUp12}. 
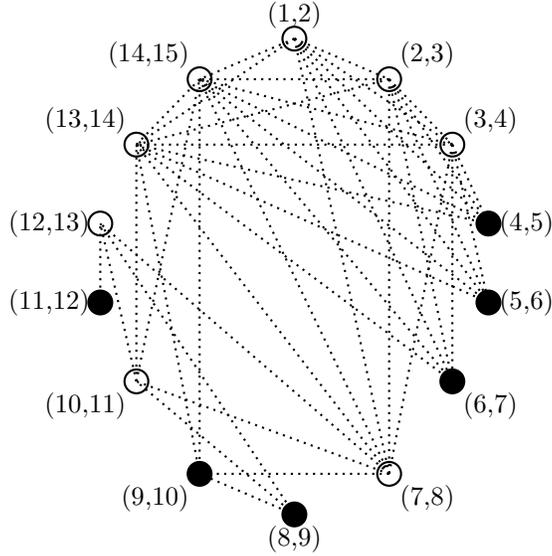
\begin{figure}[ht]
\begin{tikzpicture}[scale = .30]

\coordinate (p12) at (0.0,7.7);
\coordinate (p23) at (4.2,5.9); 
\coordinate (p34) at (7.0,3.0); 
\coordinate (p45) at (8.6,-0.5); 
\coordinate (p56) at (8.6,-4.0); 
\coordinate (p67) at (7.0,-7.5); 
\coordinate (p78) at (4.2,-11.6);
\coordinate (p89) at (0.0,-13.4); 
\coordinate (p910) at (-4.2,-11.6);
\coordinate (p1011) at (-7.0,-7.5); 
\coordinate (p1112) at (-8.6,-4.0); 
\coordinate (p1213) at (-8.6,-0.5);
\coordinate (p1314) at (-7.0,3.0); 
\coordinate (p1415) at (-4.2,5.9);

\node at (p12) [above] {\footnotesize (1,2)};
\node at (p23)  [anchor=north east,above right]{\footnotesize (2,3)};
\node at (p34) [anchor=north east,above right] {\footnotesize (3,4)};
\node at (p45) [right] {\footnotesize (4,5)};
\node at (p56) [anchor=south east, right] {\footnotesize (5,6)};
\node at (p67) [anchor=south east, below right] {\footnotesize (6,7)};
\node at (p78) [below right] {\footnotesize (7,8)};
\node at (p89) [below] {\footnotesize (8,9)};
\node at (p910) [below left] {\footnotesize (9,10)};
\node at (p1011) [below left] {\footnotesize (10,11)};
\node at (p1112) [left] {\footnotesize (11,12)};
\node at (p1213) [left] {\footnotesize (12,13)};
\node at (p1314) [above left] {\footnotesize (13,14)};
\node at (p1415) [above left] {\footnotesize (14,15)};

\draw [thick] (p12) circle (15 pt);
\draw [thick] (p23) circle (15 pt);
\draw [thick] (p34) circle (15 pt);
\draw [thick, fill = black] (p45) circle (15 pt);
\draw [thick, fill = black] (p56) circle (15 pt);
\draw [thick, fill = black] (p67) circle (15 pt);
\draw [thick] (p78) circle (15 pt);
\draw [thick, fill = black] (p89) circle (15 pt);
\draw [thick, fill = black] (p910) circle (15 pt);
\draw [thick] (p1011) circle (15 pt);
\draw [thick, fill = black] (p1112) circle (15 pt);
\draw [thick] (p1213) circle (15 pt);
\draw [thick] (p1314) circle (15 pt);
\draw [thick] (p1415) circle (15 pt);

\draw [thick, black, dotted] (p12) -- (p23);%
\draw [thick, black, dotted] (p12) -- (p34);%
\draw [thick, black, dotted] (p12) -- (p45);%
\draw [thick, black, dotted] (p12) -- (p56);%
\draw [thick, black, dotted] (p12) -- (p67);%
\draw [thick, black, dotted] (p12) -- (p78);%
\draw [thick, black, dotted] (p12) -- (p1314);%
\draw [thick, black, dotted] (p12) -- (p1415);%
\draw [thick, black, dotted] (p23) -- (p34);%
\draw [thick, black, dotted] (p23) -- (p45);%
\draw [thick, black, dotted] (p23) -- (p56);%
\draw [thick, black, dotted] (p23) -- (p67);%
\draw [thick, black, dotted] (p23) -- (p78);%
\draw [thick, black, dotted] (p23) -- (p1314);%
\draw [thick, black, dotted] (p23) -- (p1415);%
\draw [thick, black, dotted] (p34) -- (p45);%
\draw [thick, black, dotted] (p34) -- (p56);%
\draw [thick, black, dotted] (p34) -- (p67);%
\draw [thick, black, dotted] (p34) -- (p78);%
\draw [thick, black, dotted] (p34) -- (p1314);%
\draw [thick, black, dotted] (p34) -- (p1415);%
\draw [thick, black, dotted] (p45) -- (p1314);%
\draw [thick, black, dotted] (p45) -- (p1415);%
\draw [thick, black, dotted] (p56) -- (p1314);%
\draw [thick, black, dotted] (p56) -- (p1415);%
\draw [thick, black, dotted] (p67) -- (p1314);%
\draw [thick, black, dotted] (p67) -- (p1415);%
\draw [thick, black, dotted] (p78) -- (p910);%
\draw [thick, black, dotted] (p78) -- (p1011);%
\draw [thick, black, dotted] (p78) -- (p1213);%
\draw [thick, black, dotted] (p78) -- (p1314);%
\draw [thick, black, dotted] (p78) -- (p1415);%
\draw [thick, black, dotted] (p89) -- (p910);%
\draw [thick, black, dotted] (p89) -- (p1011);%
\draw [thick, black, dotted] (p89) -- (p1213);%
\draw [thick, black, dotted] (p910) -- (p1314);%
\draw [thick, black, dotted] (p910) -- (p1415);%
\draw [thick, black, dotted] (p1011) -- (p1213);%
\draw [thick, black, dotted] (p1011) -- (p1314);%
\draw [thick, black, dotted] (p1011) -- (p1415);%
\draw [thick, black, dotted] (p1112) -- (p1213);%
\draw [thick, black, dotted] (p1314) -- (p1415);%

\end{tikzpicture}
\caption{Overlap graph for $\alpha_1$. Filled vertices are  oriented.}\label{fig:OverlapGraphUp12}
\end{figure}

The overlap graph of $\alpha_1$ 
is an oriented graph and its only component is oriented. By Theorem \ref{cdrsortfundthm} $\alpha_1$ is \cdr sortable.  As the reader may verify, applying \cdr in order for the following ordered length 14 sequence of pointers accomplishes such a sorting:
\[{\tiny{
  (4, 5)\, (3, 4)\, (2, 3)\, (5, 6)\, (6, 7)\, (1, 2)\, (7, 8)\, (8, 9)\, (9, 10)\, (11, 12)\, (12, 13)\, (13, 14)\, (14, 15)\, (10, 11)}}
\]
By Theorem \ref{cdrfundthm} any sequence of applications of \cdr that sorts permutation $\alpha_1$ will use exactly fourteen pointers. But innocent-looking deviations from a sequence of pointers supporting a successful \cdr sorting of $\alpha_1$ may result in a failed sorting. For example, if in the sequence of pointers above right after \cdr has been applied to pointer (12,13) it is applied to pointer (10,11) (supporting a legitimate \cdr application at that stage), the result will be the unsorted permutation
\[
  \beta = \lbrack
  1\, 2\, 3\, 4\, 5\, 6\, 7\, 8\, 9\, 10\, 11\, 12\, 13\, 15\, 14
   \rbrack
\]
How does the ciliate decryptome resolve this problem?  The \cds~{\tt Rescue Theorem}, Theorem \ref{cdsrescue}, proves that the \cdr fixed points encountered are in fact \cds sortable. In this particular example, $\text{\cds}_{(13,14),(14,15)}(\beta)$ is the identity permutation. Thus the \cds sorting operation assures that decryption of $\alpha_1$ does not fail. Moreover, this theorem and the \cds Inevitability Theorem imply that \emph{indiscriminate} applications of \cdr and of \cds will succeed in sorting $\alpha_1$ (and any \cdr sortable micronuclear gene pattern). 

\begin{center}{\bf Example: $\alpha$TBP gene in \emph{O. trifallax}}
\end{center}

For some permutations that have been reported in extant ciliate species, \cds is both necessary and sufficient: For example the micronuclear precursor of the $\alpha$ Telomere Binding Protein gene in \emph{Oxytricha trifallax} is represented by the permutation $\lbrack 1,\; 3,\; 5,\; 7,\; 9,\; 11,\; 2,\; 4,\; 6,\; 8,\; 10,\; 12\rbrack$ has this feature. This permutation was reported in \cite{PDP}.

\begin{center}{\bf DNA polymerase $\alpha$ gene in \emph{O. trifallax}}
\end{center}

Likewise, for some signed permutations \cdr is both necessary and sufficient to sort or reverse sort the corresponding permutation, and no application of \cds would contribute to its sorting or reverse sorting. For this discussion, define for each $n$ the signed permutations
\[
  \sigma_n = \lbrack -2n,\; \cdots,\; -2,\; 1,\; 3,\; \cdots,\; 2n-1,\; 2n+1 \rbrack.
\]
and 
\[
  \tau_n = \lbrack -2n,\; \cdots,\; -2,\; 1,\; 3,\; \cdots,\; 2n-1 \rbrack.
\]

The oriented overlap graph of $\sigma_n$ consists of $2n$ isolated oriented vertices, while the oriented overlap graph of $\tau_n$ consists of $2n-1$ isolated oriented vertices. Observe that any pointer is eligible for a \cdr move, and that this oriented graph has isolated vertices only. Thus, no applications of \cds to any intermediaries contributes to the sorting process.  The signed permutations $\sigma_n$ and $\tau_n$ also have the feature that for any selection of a pointer, a corresponding application of \cdr can be performed. Each $\sigma_n$ is \cdr sortable, while each $\tau_n$ is \cdr reverse-sortable. 

These permutations, intriguingly, occur in the micronuclear precursors of macro nuclear genes of certain ciliate species. For some species of \emph{Uroleptus}, the DNA polymerase $\alpha$ gene is $\sigma_{16}$, while for \emph{Paraurystola weissei} the same precursor is $\sigma_{20}$. In \emph{Oxytricha nova} the micronuclear precursor is
$\tau_{21}$.
 In \emph{Oxytricha trifallax} the micronuclear precursor is $\sigma_{21}$. 
Accession numbers for these DNA polymerase $\alpha$ macronuclear and corresponding micronuclear precursors in \cite{genbank} are\\
\begin{center}
\begin{tabular}{|l|l|l|l|} \hline
Species & Macronuclear & Micronuclear \\ \hline
Uroleptus &  AY293852 & AY293850  \\ \hline
Paraurystola weissei & Y293806 & AY293805  \\ \hline
Oxytricha trifallax & EF693893.1  & DQ525914.1   \\ \hline
\end{tabular}
\end{center}
\vspace{0.1in}

\section{Concluding Remarks}

Unless the applications of \textbf{cdr} operations during micronuclear decryption follow a strategy not yet discovered in the laboratory, it should be expected that \textbf{cdr} fixed points other than the identity are often encountered among intermediates of the decryption process. It would be interesting to determine whether \cdr fixed points of \cdr sortable micronuclear gene patterns actually do occur among intermediates of the decryption process.

Results reported in \cite{CL} (see Figure 3 A, B and C there) suggest that the model of \cite{PER, PER1}  based on just the \cdr and \cds sorting operations might require an additional operation to successfully sort newly observed encryption patterns.  The permutation representing the red micronuclear precursor in Figure 3 B,  and the permutation representing the gold micronuclear precursor in Figure 3 C are \cds fixed points. It would be interesting to learn how these two particular micronuclear precursors are in fact processed by the ciliate decryptome. In \cite{EHPPR} and some earlier papers it is assumed that besides \cdr and \cds, there is an additional sorting operation, \emph{boundary} \textsf{ld}, that would sort a \cds fixed point to the identity permutation. To our knowledge it has not been experimentally confirmed that this operation occurs during ciliate micronuclear decryption, nor has a satisfactory molecular mechanism for this operation in combination with \cdr and \cds been described.

\section{Acknowledgements}

The research represented in this paper was funded by an NSF REU grant DMS 1359425, by Boise State University and by the Department of Mathematics at Boise State University.

\end{document}